\newtheorem{thm}{Theorem}[section]
\newtheorem{cor}[thm]{Corollary}
\newtheorem{lem}[thm]{Lemma}
\newtheorem{exm}{Example}
\newtheorem{prop}[thm]{Proposition}
\newtheorem{defn}[thm]{Definition}
\newtheorem{rem}[thm]{Remark}
\newtheorem{ass}[thm]{Assumption}
\def\D{\mathcal{D}}
\def\Z{\mathcal{Z}}
\def\C{\mathcal{C}}
\begin{document}

\begin{center}
{\Large \bf
Triangulated quotient categories}
\bigskip

{\large Yu Liu and Bin Zhu\footnote{Supported partly by the NSF of
China (Grants 11071133, 11131001) and by in part Doctoral Program
Foundation of Institute of Higher Education (2009).}}
\bigskip

{\small
\begin{tabular}{cc}
Department of Mathematical Sciences
\\
Tsinghua University
\\
100084 Beijing, P. R. China
\\
{\footnotesize E-mail: liuyu04@mails.tsinghua.edu.cn;
bzhu@math.tsinghua.edu.cn}
\end{tabular}
}
\bigskip


\end{center}

\def\s{\stackrel}
\def\Hom{\mbox{Hom}}

\begin{abstract}
 A notion of mutation of subcategories in a right triangulated category is defined in this
     paper. When $(\Z,\Z)$ is a $\D-$mutation pair in a right triangulated category $\mathcal{C}$,
     the quotient category $\Z/\D$ carries naturally a right triangulated structure.   Moreover,
     if the right triangulated category satisfies some reasonable conditions, then the right triangulated quotient
     category $\Z/\D$
     becomes a triangulated category. When $\C$ is triangulated, our result unifies the constructions of the quotient triangulated
     categories by Iyama-Yoshino and by J{\o}rgensen respectively.

\end{abstract}

\begin{center}

\textbf{Key words.} Right triangulated category, mutation, quotient
triangulated category.
\end{center}
\medskip

\textbf{Mathematics Subject Classification.} 16G20, 16G70, 16G70,
19S99, 17B20.

\section{Introduction}

Triangulated categories are important structure in algebra and
geometry. There are two major ways to produce triangulated
categories: forming homotopy or derived categories of abelian
categories; and forming the stable category of Frobenius categories
[H] [BR] [ASS] [N].

Among the surprises produced by the recent study on cluster algebras
and cluster tilting theory is the possibility to define the notion
of mutation in a triangulated category by Iyama-Yoshino [IY], which
is a generalization of mutation of cluster tilting objects in
cluster categories [BMRRT][KR][KZ]. The latter models the mutation
of clusters of acyclic cluster algebras [FZ, K1, K2].  As one of
main results in [IY], Iyama and Yoshino proved that if $\cal
D\subseteq \cal Z$ are subcategories of a triangulated category
$\cal C$, and if $(\cal Z, Z)$ is a ${\cal D}-$mutation pair, where
$\mathcal{D}$ is rigid, i.e. $Ext^1(\mathcal{D},\mathcal{D})=0$,
then the quotient category $\cal Z / \cal D$ is a triangulated
category. Soon later, J{\o}rgensen [J] gave a similar construction
of triangulated category by quotient category in another manner. He
proved that if $\cal X$ is a functorially finite subcategory of a
 triangulated category $\cal C$ with Auslander-Reiten translate $\tau$,
 and if $\cal X$ satisfies the equation $\tau
\cal X=\cal X$, then the quotient
 category $\cal C / \cal X$ is a triangulated category.  Recently
 the authors define the mutation of torsion pairs in a triangulated
 category and give its geometric interpretation in [ZZ].

The aim of the paper is to unify these two constructions of the quotient
triangulated categories by Iyama-Yoshino in [IY] and by J{\o}rgensen
in [J] respectively. We define the notion of $\D-$mutation without
the assumption that $\D$ is rigid (compare [IY]). This generalizes the notion of
$\D-$mutation defined in [IY] where $\D$ is assumed rigid. If
$\D$ satisfies the condition of Theorem 2.3 in [J], i.e. $\cal D$ is a
functorially finite subcategory of a triangulated category $\cal C$
which satisfies the equation $\tau \cal X=\cal X$, then $(\C, \C)$
is a $\D-$mutation in our sense. Finally we prove that if $\cal
D\subseteq \cal Z$ are subcategories of a triangulated category
$\cal C$, and $(\cal Z, Z)$ is a ${\cal D}-$mutation pair, then the
quotient category $\cal Z / \cal D$ is a triangulated category.
\bigskip

Actually, our setting is right triangulated categories which were
defined and studied by Beligiannis, Assem and N.Marmaridis in [AB],
[ABM].

 The paper is organized as follows: In Section 2, we recall the definition of right triangulated category from
 [ABM], and define the notion of $\D-$mutation pair in it. We give some basic properties of right triangulated
category and of its quotient categories which are needed in the
proof of our main theorem. In Section 3, we state and prove the main
results of this paper.

\bigskip

\section{Right triangulated category}

Throughout the paper, all the subcategories of a category are full
subcategories and closed under isomorphisms. We recall some basics
on right triangulated categories from [AB], [ABM].

\begin{defn}
Let $\cal C$ be an additive category  and T an additive endofunctor of
$\cal C$. A sextuple $(A,B,C,f,g,h)$ in $\cal C$ is given by objects
$A,B,C \in \cal C$ and morphisms $f: A\rightarrow B$, $ g:
B\rightarrow C$ and $h: C\rightarrow TA$. A more suggestive notation
of sextuple is
$$A\s{f}{\rightarrow}B\s{g}{\rightarrow}C\s{h}{\rightarrow}TA.$$
A morphism from sextuples $(A,B,C,f,g,h)$ to $(A',B',C',f',g',h')$
is a triple $(a,b,c)$ of morphisms such that the following diagram
commutes:
\[ \begin{CD}
A@>f>>B @>g>>C@>h>>TA\\
@VVaV@VVb V @VVcV @VVTa V \\
A'@>f'>>B'@>g'>>C'@>h'>>TA'.
    \end{CD} \]
If in addition  $a,b$ and $c$ are isomorphisms in $\cal C$, the
morphism is then called an isomorphism of sextuples.
\end{defn}

A class $\sum$ of sextuples in $\cal C$ is called a right
triangulation of $\cal C$ if the following conditions $TR(0)-TR(5)$ are satisfied.
The elements of $\sum$ are then called right triangles in $\cal C$,
and the tripe $({\cal{ C}}, T,\sum)$ is called a right triangulated
category, or simply $\C$ is called a right triangulated category. The functor $T$ is called the shift functor of the right
triangulated category $\C$.

 Thus if $T:\C\rightarrow \C$ is an
equivalence, the right triangulated category $\cal C$ is a
triangulated category. In this case, right triangles in $\C$ are
called triangles [H].
\medskip

$TR(0).$ $\sum$ is closed under isomorphisms.\\
$TR(1).$ For any $A\in \cal C$, $$0\s{0}{\rightarrow}A\s{1}{\rightarrow}A\s{0}{\rightarrow}0$$ is a right triangle.\\
$TR(2).$ Any morphism $f: A\rightarrow B$ in $\cal C$ can be
extended to a right triangle $$A\s{f}{\rightarrow}B\s{g}{\rightarrow}C\s{h}{\rightarrow}TA.$$\\
$TR(3).$ If
$$A\s{f}{\rightarrow}B\s{g}{\rightarrow}C\s{h}{\rightarrow}TA$$ is a
right triangle, then
$$B\s{g}{\rightarrow}C\s{h}{\rightarrow}TA\s{-Tf}{\rightarrow}TB$$ is a right triangle.\\
$TR(4).$ Given a commutative diagram where the rows are right
triangles as follow:
$$ \begin{array}{cccclcl}
A&\s{f}{\rightarrow}&B&\s{g}{\rightarrow}&C&\s{h}{\rightarrow}&TA\\
\downarrow a&&\downarrow b&&\ &&\downarrow Ta\\
A'&\s{f'}{\rightarrow}&B'&\s{g'}{\rightarrow}&C'&\s{h'}{\rightarrow}&TA',\\
\end{array}$$
there exists a morphism $(a,b,c)$ from the first right triangle to the second.\\
$TR(5).$ (Octahedral axiom) Consider right triangles
$X\s{a}{\rightarrow}Y\s{b}{\rightarrow}Z\s{c}{\rightarrow}TX$,
$Y\s{d}{\rightarrow}U\s{e}{\rightarrow}V\s{f}{\rightarrow}TY$ and
$X\s{da}{\rightarrow}U\s{g}{\rightarrow}W\s{h}{\rightarrow}TX$. Then
there exist morphisms $l:Z\rightarrow W$ and $i:W\rightarrow V$ such
that the following diagrams commute and the third column in first
diagram is a right triangle.
$$ \begin{array}{ccccclclclcl}
X&\s{a}{\rightarrow}&Y&\s{b}{\rightarrow}&Z&\s{c}{\rightarrow}&TX\\
\| &&\downarrow d&&\downarrow l&&\| &&\\
X&\s{da}{\rightarrow}&U&\s{g}{\rightarrow}&W&\s{h}{\rightarrow}&TX\\
\ &&\downarrow e&&\downarrow i&&\ &&\\
&&V&=&V&&\\
\ &&\downarrow f&&\downarrow &&\ &&\\
&&TY&\s{Tb}{\rightarrow}&TZ&&\\
\end{array}$$
\[ \begin{CD}
X@>da>>U @>g>>W@>h>>TX\\
@VVaV@VV1 V @VViV @VVTa V \\
Y@>d>>U@>e>>V@>f>>TY
    \end{CD} \]

\bigskip

\begin{prop}
Let $\cal C$ be a right triangulated category, $A\s{f}{\rightarrow}B\s{g}{\rightarrow}C\s{h}{\rightarrow}TA$ be
a right triangle and E an object in $\cal C$. Then we have the following long exact sequence:
$$Hom_{\C}(A,E)\s{\circ f}{\leftarrow}Hom_{\C}(B,E)\s{\circ g}{\leftarrow}Hom_{\C}(C,E)\s{\circ h}{\leftarrow}Hom_{\C}(TA,E){\leftarrow}\cdot\cdot\cdot$$
\end{prop}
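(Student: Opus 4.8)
The plan is to reduce the whole statement to the assertion that, for \emph{every} right triangle $A\xrightarrow{f}B\xrightarrow{g}C\xrightarrow{h}TA$ and every object $E$, the three-term sequence
$$\Hom_{\C}(A,E)\xleftarrow{\circ f}\Hom_{\C}(B,E)\xleftarrow{\circ g}\Hom_{\C}(C,E)$$
is exact at the middle term, and then to propagate this along the triangle. Granting the middle-exactness, $TR(3)$ shows that $B\xrightarrow{g}C\xrightarrow{h}TA\xrightarrow{-Tf}TB$, then $C\xrightarrow{h}TA\xrightarrow{-Tf}TB\xrightarrow{-Tg}TC$, and so on, are again right triangles; applying the middle-exactness successively to the original triangle, to its first rotation, to its second rotation, and so forth, yields exactness of the displayed long sequence at $\Hom_{\C}(B,E)$, $\Hom_{\C}(C,E)$, $\Hom_{\C}(TA,E)$, and at every subsequent term. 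This also pins down the meaning of the trailing ``$\cdots$'': the sequence continues as $\cdots\leftarrow\Hom_{\C}(TB,E)\leftarrow\Hom_{\C}(TC,E)\leftarrow\Hom_{\C}(T^{2}A,E)\leftarrow\cdots$, obtained by iterating the shift $T$. Only $TR(3)$ is used for this step, so it is irrelevant that $T$ need not be an equivalence --- the sequence extends only in the direction of increasing shifts.

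To prove the middle-exactness, first note that $gf=0$. Rotating the trivial right triangle $0\to A\xrightarrow{1}A\to 0$ of $TR(1)$ once by $TR(3)$ shows that $A\xrightarrow{1}A\to 0\to TA$ is a right triangle; comparing it with $A\xrightarrow{f}B\xrightarrow{g}C\xrightarrow{h}TA$ through $TR(4)$, using $1_{A}$ on the first column and $f$ on the second (so the first square commutes trivially), produces a morphism $0\to C$ through which $gf$ factors, whence $gf=0$. Hence for every $\psi\colon C\to E$ we have $(\psi g)f=\psi(gf)=0$, so $\operatorname{Im}(\circ g)\subseteq\ker(\circ f)$. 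For the reverse inclusion, let $\beta\colon B\to E$ satisfy $\beta f=0$. By $TR(1)$ the sextuple $0\to E\xrightarrow{1}E\to 0$ is a right triangle, and the square with top row $f\colon A\to B$, bottom row the zero map $0\to E$, and vertical maps $0\colon A\to 0$ and $\beta\colon B\to E$ commutes exactly because $\beta f=0$; so $TR(4)$ supplies a morphism $c\colon C\to E$ with $c\circ g=\beta$, i.e. $\beta\in\operatorname{Im}(\circ g)$. This establishes the middle-exactness, and with it the proposition.

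The only genuinely non-formal steps are the two applications of $TR(4)$ above, together with the small checks that the auxiliary sextuples $A\xrightarrow{1}A\to 0\to TA$ and $0\to E\xrightarrow{1}E\to 0$ really are right triangles (the former using $T0=0$ and the sign convention for the connecting map in $TR(3)$). I do not anticipate any serious obstacle: this is precisely the standard proof that the contravariant functor $\Hom_{\C}(-,E)$ is cohomological, and it carries over verbatim to the one-sided setting because the direction of exactness needed here invokes $TR(3)$ but never its inverse.
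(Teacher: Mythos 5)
Your proposal is correct and follows essentially the same route as the paper: reduce to middle-exactness via $TR(3)$, prove $gf=0$ by comparing with the rotated trivial triangle $A\xrightarrow{1}A\to 0\to TA$ through $TR(4)$, and obtain the factorization of any $\beta$ with $\beta f=0$ by comparing with $0\to E\xrightarrow{1}E\to 0$ through $TR(4)$. The only difference is that you spell out the rotation/propagation step that the paper leaves implicit in its opening sentence.
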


\begin{proof}
It is enough to show that $$Hom(A,E)\s{\circ
f}{\leftarrow}Hom(B,E)\s{\circ g}{\leftarrow}Hom(C,E)$$ is exact. By
$TR(1)$,$TR(3)$ and $TR(4)$, we obtain that the following
commutative diagrams of right triangles:
\[ \begin{CD}
A@>1>>A @>0>>0@>0>>TA\\
@VV1V@VVf V @VV0V @VV1 V \\
A@>f>>B@>g>>C@>h>>TA
    \end{CD} \]
Hence $gf=0$, i.e. $im(\circ g)\subseteq ker(\circ f)$. If we have
$if=0$, $i \in Hom_{\cal C}(B,E)$, by $TR(1)$, we obtain that the
following commutative diagrams of right triangles:
$$ \begin{array}{cccclcl}
A&\s{f}{\rightarrow}&B&\s{g}{\rightarrow}&C&\s{h}{\rightarrow}&TA\\
\downarrow 0&&\downarrow i&&\ &&\downarrow 0\\
0&\s{0}{\rightarrow}&E&\s{1}{\rightarrow}&E&\s{0}{\rightarrow}&0\\
\end{array}$$
By $TR(4)$, there exists $j:C\rightarrow E$ such that $i=jg$, i.e.
$ker(\circ f)\subseteq im(\circ g)$. Then $ker(\circ f)= im(\circ
g)$.
\end{proof}

\begin{defn}
A  subcategory  $\cal Z$ of a right triangulated category $\cal C$
is called extension-closed if for any right triangle
$A\s{f}{\rightarrow}B\s{g}{\rightarrow}C\s{h}{\rightarrow}TA$ with
$A,C \in \cal Z$, then we get $B \in \cal Z$.
\end{defn}

\begin{defn}
Let $\cal D$ be a subcategory of a right triangulated category $\cal
C$. A morphism $f:A\rightarrow B$ in $\cal C$ is called ${\cal
D}-epic$, if for any $D \in \cal D$, we have that
$$Hom_{\cal C}(D,A)\s{f\circ}{\rightarrow}\Hom_{\cal C}(D,B)\s{}{\rightarrow}0$$
is exact. Dually, a morphism $f:A\rightarrow B$ in $\cal C$ is
called ${\cal D}-monic$, if for any $D \in \cal D$,
$$Hom_{\cal C}(B,D)\s{\circ f}{\rightarrow}Hom_{\cal C}(A,D)\s{}{\rightarrow}0$$
is exact.
\end{defn}
\bigskip

\begin{defn}
Let $\cal D$ be a subcategory of a right triangulated category $\cal C$. A morphism
$f:A\rightarrow B$ in $\cal C$ is called a right ${\cal
D}-approximation$ of B if $A \in \cal D$ and f is a ${\cal D}-epic$.
Dually, a morphism $f:A\rightarrow B$ in $\cal C$ is called a left ${\cal D}-approximation$ of A if $B \in \cal D$ and f is a ${\cal D}-monic$.
\end{defn}
\bigskip

Now we assume that a right triangulated category $\cal C$ is
Krull-Schmidt, i.e. any object is isomorphic to a finite direct sum
of objects whose endomorphism rings are local. When we say that
$\cal D$ is a subcategory of $\cal C$, we always mean that $\cal D$
is full and is closed under isomorphisms, direct sums and direct
summands.
\bigskip

The notion of $\D-$mutation of subcategories was defined in [IY] for
a rigid subcategory $\D$. This notion generalizes the mutation of
cluster tilting objects in cluster categories[BMRRT] which was motivated by modeling the mutation of clusters of cluster
algebras [FZ]. In the following we recall the notion of
$\D-$mutation of subcategories from [IY], but we don't assume that
$\D$ is rigid here.

\begin{defn}
Let $\cal X,Y,D$ be subcategories of a right triangulated category
 $\cal C$, and assume $\cal D \subseteq \cal X,$ and $ \cal D \subseteq Y$.
The subcategory $\mu^{-1}(\cal X; \cal D)$ is defined as the
subcategory consisting of objects $Y \in \cal C$ such that $Y\in \D$
or there exists a
 right triangle
$$X\s{f}{\rightarrow}D\s{g}{\rightarrow}Y\s{h}{\rightarrow}TX,$$
where $X \in \cal X$, $D \in \cal D$,  $f$ is a left ${\cal D}-approximation$ and $g$ is a right
${\cal D}-approximation$. Dually, for $\cal Y$, the subcategory
 $\mu(\cal Y; \cal D)$ is defined as the subcategory consisting of objects
 $X \in \cal C$ such that $X\in \D$ or there exists a right triangle
$$X\s{f}{\rightarrow}D\s{g}{\rightarrow}Y\s{h}{\rightarrow}TX,$$
where $Y \in \cal Y$, $D \in \cal D$, $f$ is a left ${\cal D}-approximation$ and $g$ is a right
${\cal D}-approximation$.

  A pair $(\cal X,Y)$ of subcategories of $\cal C$ is called a ${\cal
D}-mutation$ pair if $\mu^{-1}(\cal {X}; \cal {D})=\cal {Y}$ and
$\mu(\cal Y; \cal D)=\cal {X}$.
\end{defn}

\begin{defn}
Let $\cal D$ be a subcategory of a right triangulated category $\cal C$. We denote by $[{\cal
D}](X,Y)$ the subgroup of $Hom_{\cal C}(X,Y)$ consisting of
morphisms which factor through an object in $\cal D$. We say that
$\cal D$ is factor-through-epic if for any morphism $f \in [{T^n
\cal D}](TX,TY)$ with $n>0$, there exists a morphism $f' \in [{T^{n-1}\cal D}](X,Y)$
such that $Tf'=f$.
\end{defn}

\begin{rem} The zero subcategory $\cal D$$=0$ is factor-through-epic. Another typical case is that: when $\cal C$ is a
triangulated category, then any subcategory $D$ is
factor-through-epic. \end{rem}

\begin{lem}
For any two objects $A,B$ of a right triangulated category $\cal C$,
$$A\s{i_A}{\rightarrow}A\oplus B\s{p_B}{\rightarrow}B\s{0}{\rightarrow}TA$$
is a right triangle, where $i_A$ is a section and $p_B$ is a
retraction.
\end{lem}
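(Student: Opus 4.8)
The plan is to build the required right triangle directly using the axioms, rather than appealing to any structure theory. First I would apply $TR(2)$ to the section $i_A : A \to A \oplus B$ to extend it to a right triangle $A \xrightarrow{i_A} A\oplus B \xrightarrow{g} C \xrightarrow{h} TA$. The goal is then to show that this $C$ is isomorphic to $B$ in a way compatible with the maps, i.e. that the triangle above is isomorphic (as a sextuple) to $A \xrightarrow{i_A} A\oplus B \xrightarrow{p_B} B \xrightarrow{0} TA$; by $TR(0)$ it then suffices to exhibit that isomorphism.

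The key observation is that $i_A$ is a split monomorphism with retraction $p_A : A\oplus B \to A$, so $p_A i_A = 1_A$. I would first check that $h = 0$: by Proposition 1.2 applied to the right triangle through $i_A$, precomposition with $i_A$ gives an exact sequence, and since $p_A \circ i_A = 1_A$ is (split) epi, the connecting map forces $\circ h = 0$ on all Hom-groups into any $E$; taking $E = TA$ and chasing $1_{TA}$ (or more directly, using that $h$ composed with a suitable map vanishes) yields $h = 0$. Alternatively, and more cleanly, $TR(1)$ gives the right triangle $0 \to A \xrightarrow{1} A \to 0$, and $TR(3)$ applied to it produces $A \xrightarrow{1} A \to 0 \to TA$; one then builds a morphism of right triangles realizing $A\oplus B$ as an "extension" of the trivial triangle on $A$ by the trivial triangle on $B$, using $TR(4)$ to fill in the middle. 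Concretely I would set up the morphism of sextuples from $A \xrightarrow{1} A \to 0 \to TA$ to $A \xrightarrow{i_A} A\oplus B \xrightarrow{g} C \xrightarrow{h} TA$ with first two components $1_A$ and $i_A$; since the left square commutes, $TR(4)$ yields a third component $0 \to C$, and then comparing with the section/retraction data pins down $C \cong B$.

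Perhaps the most transparent route: apply $TR(2)$ to get $A \xrightarrow{i_A} A\oplus B \xrightarrow{g} C \xrightarrow{h} TA$, then use $TR(4)$ on the diagram with rows this triangle and $0 \to B \xrightarrow{1} B \to 0$, with vertical maps $0 : A \to 0$, $p_B : A\oplus B \to B$ (the left square commutes since $p_B i_A = 0$), obtaining $c : C \to B$ with $c g = p_B$. Symmetrically, using the retraction $p_A$ and a morphism into the triangle for $i_A$, or by a direct diagram chase via Proposition 1.2 (the long exact sequence in $\mathrm{Hom}_{\mathcal C}(-,B)$ shows $\circ g$ is surjective with kernel the image of $\circ h = 0$, giving a splitting), one produces $c' : B \to C$ with $g c' $ a right inverse situation; then the five-lemma-style argument in the triangulated setting (valid here using Proposition 1.2 and the Yoneda-type argument that a morphism of right triangles which is an isomorphism on two of three terms is an isomorphism on the third) shows $c$ is an isomorphism. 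Finally, $TR(0)$ transports the right triangle across this isomorphism to the claimed one, and the statements that $i_A$ is a section and $p_B$ a retraction are immediate from $p_A i_A = 1_A$ and $p_B i_B = 1_B$.

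The main obstacle I anticipate is that, unlike in a triangulated category, the "five lemma for triangles" is only available in the weakened one-sided form: $TR(4)$ only lets us complete commutative squares where the \emph{left} square is given, and we cannot freely rotate to the left (only $TR(3)$ rotates to the right). So the argument showing $c : C \to B$ is an isomorphism must be arranged to use only the available rotations and the long exact sequence of Proposition 1.2 (in the variable $\mathrm{Hom}(-,E)$), taking $E = C$ and $E = B$ to produce mutually inverse maps; I would need to be careful that every diagram I feed into $TR(4)$ has its \emph{first} square commuting. Once that bookkeeping is done the rest is routine.
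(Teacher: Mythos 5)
Your proposal follows essentially the same route as the paper's proof: extend $i_A$ to a right triangle $A\xrightarrow{i_A}A\oplus B\xrightarrow{p}C\xrightarrow{c}TA$ by $TR(2)$, show the connecting morphism $c$ vanishes, produce $f\colon C\to B$ with $fp=p_B$, verify that $f$ is invertible with inverse $p\circ i_B$ by combining the biproduct identity $1_{A\oplus B}=i_Ap_A+i_Bp_B$ with the long exact sequence of Proposition 2.2 (taking $E=C$ and $E=B$), and finally transport the triangle by $TR(0)$. The only caution is that the ``two-out-of-three isomorphism'' principle you float as an alternative cannot be invoked at that stage, since the sextuple $A\to A\oplus B\to B\to TA$ is not yet known to be a right triangle (that is exactly what is being proved), so the explicit mutually-inverse computation you describe in your final paragraph -- and which the paper carries out -- is genuinely needed.
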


\begin{proof}
By $TR(2)$, we can extend the section $A\s{i_A}{\rightarrow}A\oplus
B$ into a right triangle:
$$(*):  \  \  A\s{i_A}{\rightarrow}A\oplus B\s{p}{\rightarrow}C\s{c}{\rightarrow}TA.$$
By $TR(1)$, $TR(3)$ and $TR(4)$, the following diagram where the rows
are right triangles commutes:
\[ \begin{CD}
A@>i_A>>A\oplus B @>p>>C@>c>>TA\\
@VV1V@VVp_A V @VV0V @VV1 V \\
A@>1>>A@>0>>0@>0>>TA
    \end{CD} \]
It follows that $c=0$. Since $p_B$$\circ$$i_A$$=0$, by Prop 2.2,
there exists a morphism $f: C\rightarrow B$ such that $p_B$$=fp$.
From the following diagram:
$$ \begin{array}{cccclclcl}
&&B&\s{f}{\leftarrow}&C&&\\
\ &&\downarrow i_B&&\| &&\ &&\\
A&\s{i_A}{\rightarrow}&A \oplus B&\s{p}{\rightarrow}&C&\s{0}{\rightarrow}&TA\\
\ &&\downarrow p_B&&\| &&\ &&\\
&&B&\s{f}{\leftarrow}&C&&\\
\end{array},$$
we know that $(1_C-p \circ i_B \circ f) \circ p=p \circ (1_{A\oplus
B}-i_B \circ f \circ p)=p \circ i_A \circ p_A=0$. By Prop 2.2,
$1_C-p \circ i_B \circ f$ factors through $C\s{0}{\rightarrow}TA$,
hence $1_C-p \circ i_B \circ f=0$, and $1_C=p \circ i_B \circ f$. We
also have $f \circ (p \circ i_B)=(f \circ p) \circ i_B=1_B$.
 Then $f$ is an isomorphism. Thus  the sextuple $A\s{i_A}{\rightarrow}A\oplus B\s{p_B}{\rightarrow}B\s{0}{\rightarrow}TA$
 is isomorphic to the right triangle $(*)$ under $(1_A, 1_{A\oplus B}, f)$. Hence it is a right triangle.
\end{proof}

\begin{ass} From now on to the end of the paper, we assume that
$\cal C$ is a right triangulated category  and satisfies: If
$B\s{g}{\rightarrow}C\s{h}{\rightarrow}TA\s{-Tf}{\rightarrow}TB$ is
a right triangle, then
$A\s{f}{\rightarrow}B\s{g}{\rightarrow}C\s{h}{\rightarrow}TA$ is a
right triangle.\end{ass}

\begin{lem} Let $\C$ be a right triangulated category satisfying Assumption 2.10. Then the shift functor $T$ is faithful:\end{lem}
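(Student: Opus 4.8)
The plan is to use Assumption 2.10 to convert the hypothesis $Tf=0$ into a right triangle in which $f$ has been replaced by the zero morphism, and then to read off $f=0$ from Proposition 2.2. Since $T$ is additive, it acts as a group homomorphism on each $\Hom$-group, so it is enough to show that $Tf=0$ forces $f=0$ for a morphism $f:A\to B$.

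So fix $f:A\to B$ with $Tf=0$. First I would apply $TR(2)$ to complete $f$ to a right triangle $A\s{f}{\rightarrow}B\s{g}{\rightarrow}C\s{h}{\rightarrow}TA$, and then rotate it once via $TR(3)$ to obtain the right triangle $B\s{g}{\rightarrow}C\s{h}{\rightarrow}TA\s{-Tf}{\rightarrow}TB$. Because $Tf=0$, the last morphism of this triangle is $0=T(0_{A\to B})$, so Assumption 2.10 applies and yields that $A\s{0}{\rightarrow}B\s{g}{\rightarrow}C\s{h}{\rightarrow}TA$ is itself a right triangle.

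Now apply Proposition 2.2 to this last triangle with $E=B$: the sequence $\Hom(A,B)\s{\circ 0}{\leftarrow}\Hom(B,B)\s{\circ g}{\leftarrow}\Hom(C,B)$ is exact, and since $\circ 0$ is the zero map its kernel is all of $\Hom(B,B)$; hence $\circ g$ is surjective, so $1_B=r\circ g$ for some $r:C\to B$, that is, $g$ is a split monomorphism. On the other hand, from the original right triangle $A\s{f}{\rightarrow}B\s{g}{\rightarrow}C\s{h}{\rightarrow}TA$ we have $g\circ f=0$ (the vanishing of two consecutive morphisms in a right triangle is exactly what is shown at the start of the proof of Proposition 2.2). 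Composing with the retraction $r$ gives $f=r\circ g\circ f=0$, as desired.

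The argument is short once the right first move is found; the only genuine subtlety — and the precise place where Assumption 2.10 enters — is the passage from $Tf=0$ back to an honest right triangle with zero first morphism, since $TR(3)$ by itself only permits rotation in one direction. Once $g$ is known to split, faithfulness drops out of Proposition 2.2 with no further work.
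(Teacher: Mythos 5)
Your proof is correct, but it takes a genuinely different route from the paper's. The paper invokes Lemma 2.9 to produce the split right triangle $B\s{i_B}{\rightarrow}B\oplus TA\s{p_{TA}}{\rightarrow}TA\s{0}{\rightarrow}TB$ and then applies Assumption 2.10 with the given $f$ itself (legitimate precisely because its last map is $0=-Tf$), obtaining the right triangle $A\s{f}{\rightarrow}B\s{i_B}{\rightarrow}B\oplus TA\s{p_{TA}}{\rightarrow}TA$; the composite $i_Bf$ of two consecutive maps vanishes by Proposition 2.2, and $i_B$ is already a section, so $f=0$ at once. You instead complete $f$ to its own cone triangle, rotate it, and apply Assumption 2.10 with the zero morphism substituted for $f$ --- also legitimate, since $-T(0)=0=-Tf$ means the hypothesis of the assumption is literally the same sextuple --- and then you must do one extra step: extract a retraction $r$ of $g$ from the exact sequence of Proposition 2.2 before concluding $f=rgf=0$. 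What the paper's choice buys is that the monomorphism comes for free from the explicit split triangle; what your choice buys is independence from Lemma 2.9, so the argument runs on $TR(2)$, $TR(3)$, Assumption 2.10 and Proposition 2.2 alone. The essential insight --- that Assumption 2.10 is the only available device for converting the vanishing of $Tf$ into a statement one shift lower --- is common to both, and each of your individual steps (the instantiation of the assumption at the zero morphism, the surjectivity of $\circ\, g$ on $\Hom(B,B)$, and $gf=0$ from consecutive maps) checks out.
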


{\bf Proof.} By Lemma 2.9, for any objects $A$ and $B$, there exists a right triangle:\\
$$B\s{i_B}{\rightarrow}B\oplus TA\s{p_{TA}}{\rightarrow}TA\s{0}{\rightarrow}TB.$$
For any morphism $f: A\rightarrow B$, if $Tf=0$, by the Assumption
2.10, we have the right triangle
$$A\s{f}{\rightarrow}B\s{i_B}{\rightarrow}B\oplus
TA\s{p_{TA}}{\rightarrow}TA.$$
By Proposition 2.2, ${i_B}f=0$, but $i_B$ is a monomorphism, thus $f=0$.\\

\begin{rem} Triangulated categories satisfy the Assumption 2.10.
There are right triangulated categories satisfying the assumption,
see Example 4 in Section 3.5. \end{rem}

\begin{prop} Let $(a,b,c)$ be a morphism of right triangles in a
right triangulated category $\C$:
\[ \begin{CD}
A@>f>>B @>g>>C@>h>>TA\\
@VVaV@VVb V @VVcV @VVTa V \\
A'@>f'>>B'@>g'>>C'@>h'>>TA'.
    \end{CD} \]
If $a$ and $b$ are isomorphisms, so is $c$.
\end{prop}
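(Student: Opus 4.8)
The plan is to apply the cohomological functor $\Hom_\C(-,E)$ to both right triangles, compare the resulting long exact sequences via the five lemma, and then extract an inverse of $c$ by a Yoneda-type argument.

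First I would rotate each of the two right triangles repeatedly by $TR(3)$ and apply Proposition 2.2 to the rotations. For a fixed object $E$ this produces, for the top triangle, the exact sequence
$$\cdots \longleftarrow \Hom_\C(TB,E)\longleftarrow \Hom_\C(TA,E)\longleftarrow \Hom_\C(C,E)\longleftarrow \Hom_\C(B,E)\longleftarrow \Hom_\C(A,E),$$
which is exact at every term except possibly $\Hom_\C(A,E)$, and the analogous one for the bottom triangle. The triple $(a,b,c)$ together with $TR(3)$ yields morphisms between all the rotated triangles as well; the only square that needs checking is the one containing $-Tf$, and it commutes because $Tb\circ(-Tf)=-T(bf)=-T(f'a)=-Tf'\circ Ta$. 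Applying $\Hom_\C(-,E)$ therefore gives a commutative ladder between the two long exact sequences, whose vertical maps are $\circ a,\ \circ b,\ \circ c,\ \circ Ta,\ \circ Tb,\dots$

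Since $a$ and $b$ are isomorphisms and $T$ is a functor, $Ta$ and $Tb$ are isomorphisms too, so the vertical maps $\circ a$, $\circ b$, $\circ Ta$, $\circ Tb$ are all bijective. Focusing on the five-term window centred at $\Hom_\C(C,E)$, the rows are exact at the three interior terms, so the five lemma shows that
$$\circ c\colon \Hom_\C(C',E)\longrightarrow \Hom_\C(C,E)$$
is bijective for every object $E$ of $\C$. Taking $E=C$, surjectivity gives a morphism $d\colon C'\rightarrow C$ with $d\circ c=1_C$; taking $E=C'$, injectivity of $\circ c$ on $\Hom_\C(C',C')$ together with $(c\circ d)\circ c=c\circ(d\circ c)=c=1_{C'}\circ c$ forces $c\circ d=1_{C'}$. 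Hence $c$ is an isomorphism with inverse $d$.

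The argument is essentially formal, so there is no serious obstacle; the only points requiring a little care are verifying that the rotated diagrams are genuine morphisms of right triangles (so that the $\Hom$-ladder commutes) and observing that the one-sided exactness provided by Proposition 2.2 already suffices to invoke the five lemma at the term $\Hom_\C(C,E)$. In particular, neither Assumption 2.10 nor the faithfulness of $T$ is needed for this statement.
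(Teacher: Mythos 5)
Your proof is correct and follows essentially the same route as the paper: apply $\Hom_\C(-,E)$ to obtain a commutative ladder of exact sequences, conclude by the five lemma that $\circ c$ is bijective for every $E$, and then deduce that $c$ is an isomorphism by a Yoneda argument. Your added care in checking the sign in the rotated square and in extracting the explicit inverse is a welcome refinement of the paper's terser version, but it is the same proof.
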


\begin{proof}
By applying the cohomological functor $Hom_{\C}(-,X)$ to the commutative
diagram above, we obtain the following commutative diagram which has
exact rows:
\[ \begin{CD}
Hom_{\C}(TB,X)@>>>Hom_{\C}(TA,X)@>>>Hom_{\C}(C,X)@>>>Hom_{\C}(B,X)@>>>Hom_{\C}(A,X)\\
@VVHom_{\C}(Tb,X)V@VVHom_{\C}(Ta,X) V @VVHom_{\C}(c,X)V @VVHom_{\C}(b,X) V@VVHom_{\C}(a,X) V\\
Hom_{\C}(TB',X)@>>>Hom_{\C}(TA',X)@>>>Hom_{\C}(C',X)@>>>Hom_{\C}(B',X)@>>>Hom_{\C}(A',X)
    \end{CD} \]
By Snake-Lemma, $Hom_{\C}(c,X):Hom_{\C}(C,X)\rightarrow
Hom_{\C}(C',X)$ is an isomorphism for any $X \in C$, hence
$Hom_{\C}(c,-):Hom_{\C}(C',-)\rightarrow Hom_{\C}(C,-)$ is a
functorial isomorphism. By Yoneda Lemma, $c$ is an isomorphism.
\end{proof}

\bigskip

\section{Quotient categories of a right triangulated category}

\subsection{Basics on quotient categories}

\begin{defn}
Let $\cal D\subset \mathcal{Z} $ be subcategories of a category
$\cal C$. We denote by $\cal Z$$/$$\cal D$ the category whose
objects are objects of $\cal Z$ and whose morphisms are elements of
$Hom_{\cal Z}(X,Y)/[{\cal D}](X,Y)$ for $X,Y \in \cal Z$. Such
category is called the quotient category of $\cal Z$ by $\cal D$.
For any morphism  $f: X\rightarrow Y$ in $\cal Z$, we denote by
$\overline{f}$ the image of $f$ under the natural quotient functor
$\Z\rightarrow \Z/$$\cal D$.
\end{defn}

\begin{lem}
Let $\cal D$$\subseteq \cal Z$ be subcategories of a right
triangulated category $\cal C$ and $\cal D$ be factor-through-epic.
Consider the following commutative diagram:
\[ \begin{CD}
M@>\alpha>>D @>\beta>>S@>\gamma>>TM\\
@VVxV@VVy V @VVzV @VVTx V \\
X@>f>>Y@>g>>Z@>h>>TX,
    \end{CD} \]
where rows are right triangles, $D \in \cal D$ and $g$ is a ${\cal
D}-epic$. If $\overline{z}=0$ in the quotient category $\Z/\D$, then
$\overline{x}=0$.
\end{lem}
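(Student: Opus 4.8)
The plan is to transport the vanishing of $\overline{z}$ along the diagram: push the factorization of $z$ through $\D$ until, after applying the shift $T$, it produces a factorization of $Tx$ through an object of $T\D$, and then remove the $T$ using the factor-through-epic hypothesis together with the faithfulness of $T$.

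First, since $\overline{z}=0$ in $\Z/\D$, write $z=ba$ with $a\colon S\to D'$, $b\colon D'\to Z$ and $D'\in\D$. As $g$ is $\D$-epic and $D'\in\D$, the induced map $\Hom_{\C}(D',Y)\to\Hom_{\C}(D',Z)$ is surjective, so $b=gb'$ for some $b'\colon D'\to Y$; hence $z=g(b'a)$ factors through $g$. Applying $TR(3)$ to the bottom right triangle gives $hg=0$, so $hz=hg(b'a)=0$, while the right-hand square of the diagram gives $hz=(Tx)\gamma$. Therefore $(Tx)\gamma=0$.

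Next, apply $TR(3)$ to the top right triangle to obtain the right triangle $D\s{\beta}{\rightarrow}S\s{\gamma}{\rightarrow}TM\s{-T\alpha}{\rightarrow}TD$, and feed it into the long exact sequence of Proposition 2.2 with $E=TX$. Exactness at $\Hom_{\C}(TM,TX)$ together with $(Tx)\circ\gamma=0$ yields $Tx=u\circ(-T\alpha)=(-u)\circ T\alpha$ for some $u\colon TD\to TX$; in particular $Tx\in[T\D](TM,TX)$. Finally, since $\D$ is factor-through-epic, the case $n=1$ of its definition (applied with the roles of the two objects taken by $M$ and $X$) gives $x'\in[\D](M,X)$ with $Tx'=Tx$, and since $T$ is faithful by Lemma 2.11 (this is where Assumption 2.10 enters) we conclude $x=x'\in[\D](M,X)$, i.e. $\overline{x}=0$.

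The only step needing genuine care is the last one: converting ``$Tx$ factors through an object of $T\D$'' into ``$x$ factors through an object of $\D$''. This is exactly what the factor-through-epic hypothesis buys us, and it must be paired with the faithfulness of $T$; the rest is routine diagram chasing using Proposition 2.2 and the rotation axiom $TR(3)$.
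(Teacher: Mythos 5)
Your proof is correct and follows essentially the same route as the paper's: factor $z$ through $\D$ and then through $g$ using that $g$ is $\D$-epic, deduce $(Tx)\circ\gamma=0$, use the rotated triangle together with Proposition 2.2 to write $Tx$ as a composite through $T\alpha$, and finish with the factor-through-epic hypothesis plus faithfulness of $T$ (Lemma 2.11). The only cosmetic difference is that the paper applies the factor-through-epic condition to the resulting morphism $TD\to TX$ and then factors $x$ explicitly as $-\nu'\alpha$, whereas you apply it directly to $Tx\in[T\D](TM,TX)$; the two are interchangeable.
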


\begin{proof}
 $\overline{z}=0$ means that $z$ factors through an object $D' \in \cal
D$. Since $g$ is a ${\cal D}-epic$, we have the following
commutative diagram:
$$ \begin{array}{ccclcl}
D'&\s{1}{\rightarrow}&D'&\s{a}{\leftarrow}&S\\
\downarrow c&&\downarrow b&&\downarrow z\\
Y&\s{g}{\rightarrow}&Z&\s{1}{\rightarrow}&Z.\\
\end{array}$$
Hence $z=ba=gca$. Then $Tx \circ \gamma=hz=hgca=0$. By Proposition
2.2, there exists  $\nu :TD\rightarrow TX$ which makes the diagram
\[ \begin{CD}
TM@>-T\alpha>>TD @>-T\beta>>TS@>-T\gamma>>T^2M\\
@VVTxV@VV\nu V @VV0V @VVT^2xV \\
TX@>1>>TX@>0>>0@>0>>T^2X
    \end{CD} \]
commutative. Since $\cal D$ is factor-through-epic, there exists
${\nu}':D\rightarrow X$ such that $\nu=T{\nu}'$. Hence $Tx=-T\nu$$'
T\alpha$, which forces $x=-\nu$$' \alpha$, hence $\overline{x}=0$.
\end{proof}

\begin{lem}
Let $\cal D$ be a subcategory of a right triangulated category $\cal
C$ which is
 factor-through-epic. Consider the right triangle:
\[ \begin{CD}
A'@>f'>>D@>g'>>C'@>h'>>TA'
    \end{CD} \]
where $D \in \cal D$. If $h' \circ c=0$, for $c:C\rightarrow C'$,
then we can find $d \in Hom_{\C}(C,D)$ such that $c=g'd$.
\end{lem}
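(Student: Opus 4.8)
The plan is to reduce the statement to one shift up, where $TR(4)$ can be applied with identity morphisms on the outer terms so as to force the factorization we want, and then to descend again using that $\D$ is factor-through-epic together with the faithfulness of $T$ (Lemma 2.11).

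First I would use $TR(2)$ to extend $c$ to a right triangle $C\s{c}{\rightarrow}C'\s{p}{\rightarrow}Q\s{q}{\rightarrow}TC$. Since $h'\circ c=0$, Proposition 2.2 applied to this right triangle and the object $TA'$ shows that $h'$ factors through $p$, say $h'=\mu\circ p$ with $\mu:Q\rightarrow TA'$. Rotating (via $TR(3)$) this triangle once and the given triangle twice produces the right triangles $C'\s{p}{\rightarrow}Q\s{q}{\rightarrow}TC\s{-Tc}{\rightarrow}TC'$ and $C'\s{h'}{\rightarrow}TA'\s{-Tf'}{\rightarrow}TD\s{-Tg'}{\rightarrow}TC'$. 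The identity $h'\circ 1_{C'}=\mu\circ p$ makes the left square of
$$ \begin{array}{ccccccc}
C'&\s{p}{\rightarrow}&Q&\s{q}{\rightarrow}&TC&\s{-Tc}{\rightarrow}&TC'\\
\| &&\downarrow \mu&&\downarrow \rho&&\|\\
C'&\s{h'}{\rightarrow}&TA'&\s{-Tf'}{\rightarrow}&TD&\s{-Tg'}{\rightarrow}&TC'
\end{array}$$
commute, so by $TR(4)$ there is a morphism of right triangles whose third component is some $\rho:TC\rightarrow TD$ and whose fourth component is $T(1_{C'})=1_{TC'}$; commutativity of the rightmost square then reads $(-Tg')\circ\rho=-Tc$, i.e.\ $Tg'\circ\rho=Tc$.

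It remains to pull $\rho$ down to a morphism $C\rightarrow D$. Since $D\in\D$ we have $TD\in T\D$, so $\rho$ factors through an object of $T\D$ (namely $TD$, via its identity), i.e.\ $\rho\in[T\D](TC,TD)$; as $\D$ is factor-through-epic there is $d\in[\D](C,D)$, in particular a morphism $d:C\rightarrow D$, with $Td=\rho$. Then $T(g'd)=Tg'\circ\rho=Tc$, and $T$ is faithful by Lemma 2.11, whence $g'd=c$, as desired.

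The main obstacle is conceptual rather than computational: in a right triangulated category the covariant functor $\Hom_{\C}(C,-)$ need not be half-exact, so one cannot lift $c$ along $g'$ directly. The remedy is precisely the two-step detour above — pushing up by $T$, where $TR(3)$ together with $TR(4)$ (with the forced outer identity maps) does produce the factorization, and then coming back down through the factor-through-epic property of $\D$. The only other point needing care is the bookkeeping of the signs introduced by $TR(3)$, but these cancel harmlessly.
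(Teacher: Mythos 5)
Your proof is correct and takes essentially the same route as the paper's: both apply $TR(4)$ at the shifted level, mapping into the twice-rotated triangle $C'\rightarrow TA'\rightarrow TD\rightarrow TC'$ to obtain a morphism $TC\rightarrow TD$ with $Tg'\circ\rho=Tc$, and then descend using the factor-through-epic property of $\D$ together with the faithfulness of $T$ (Lemma 2.11). The only difference is cosmetic: the paper feeds $TR(4)$ the degenerate right triangle $C\rightarrow 0\rightarrow TC\stackrel{-1}{\rightarrow}TC$, using $h'\circ c=0$ directly to make the first square commute, whereas you first complete $c$ to a right triangle and factor $h'$ through its cone via Proposition 2.2 --- an extra but harmless step.
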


\begin{proof}
Since $h' \circ c=0$, there exists a commutative diagram:
\[ \begin{CD}
C@>0>>0 @>0>>TC@>-1>>TC\\
@VVcV@VV0 V @VVd_1V @VVTc V \\
C'@>h'>>TA'@>-Tf'>>TD@>-Tg'>>TC',
    \end{CD} \]
where the rows are right triangles, $d_1$ exists by $(TR4)$. Since
$\cal D$ is factor-through-epic, there is a morphism $d \in
Hom_{\C}(C,D)$ such that $d_1=Td$. Hence $Tc=Tg'Td=T(g'd)$. Then
$c=g'd$ by Lemma 2.11.
\end{proof}

\begin{lem}
Let $\cal D \subset \mathcal{Z}$ be subcategories of a right
triangulated category $\cal C$ and $\cal D$ be factor-through-epic.
Consider the following commutative diagram:
\[ \begin{CD}
A@>f>>B @>g>>C@>h>>TA\\
@VVaV@VVb V @VVcV @VVTa V \\
A'@>f'>>D@>g'>>C'@>h'>>TA',
    \end{CD} \]
where the rows are right triangles, $D \in \cal D$ and $f$ a left
$\cal D$$-monic$. If $\overline{a}=0$ in the quotient category
$\Z/\D$, then $\overline{c}=0$.
\end{lem}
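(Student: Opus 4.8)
The plan is to reduce the statement to Lemma 3.3 by proving that $h'\circ c=0$; once this is known, a factorization of $c$ through $D\in\D$ comes for free, and that is exactly $\overline{c}=0$ in $\Z/\D$.

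First I would unwind the hypothesis $\overline{a}=0$: it means $a$ factors through some $D'\in\D$, say $a=a_2a_1$ with $a_1:A\rightarrow D'$ and $a_2:D'\rightarrow A'$. Since $f:A\rightarrow B$ is $\D$-monic, the morphism $a_1$ factors through $f$, i.e.\ there is $b_1:B\rightarrow D'$ with $a_1=b_1f$; writing $b':=a_2b_1:B\rightarrow A'$ we obtain $a=b'f$, hence $Ta=(Tb')(Tf)$. On the other hand, from the top right triangle $A\s{f}{\rightarrow}B\s{g}{\rightarrow}C\s{h}{\rightarrow}TA$ and $TR(3)$, the sextuple $B\s{g}{\rightarrow}C\s{h}{\rightarrow}TA\s{-Tf}{\rightarrow}TB$ is again a right triangle, so the composite of its last two morphisms vanishes — this is exactly the identity ``$gf=0$'' established inside the proof of Proposition 2.2, now applied to this triangle — giving $(Tf)\circ h=0$. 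Combining this with the commutativity of the right-hand square of the given diagram yields
$$h'\circ c=(Ta)\circ h=(Tb')(Tf)h=0.$$

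It then remains to apply Lemma 3.3 to the bottom right triangle $A'\s{f'}{\rightarrow}D\s{g'}{\rightarrow}C'\s{h'}{\rightarrow}TA'$ (here $D\in\D$ and $\D$ is factor-through-epic, as assumed): from $h'\circ c=0$ we obtain a morphism $d\in Hom_{\C}(C,D)$ with $c=g'd$, so $c$ factors through $D\in\D$ and therefore $\overline{c}=0$ in $\Z/\D$. I do not expect a genuine obstacle: the only move that is not purely formal is to use the $\D$-monic property of $f$ to rewrite $a=b'f$, after which the vanishing $(Tf)\circ h=0$ is automatic and Lemma 3.3 closes the argument; the only care required is the bookkeeping of the composites (and of the sign coming from $TR(3)$).
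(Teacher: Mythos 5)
Your proposal is correct and follows essentially the same route as the paper's own proof: use the $\D$-monic property of $f$ to rewrite $a$ as a composite ending in $f$, deduce $h'\circ c=(Ta)h=(Tb')(Tf)h=0$ from the vanishing of consecutive composites in the rotated triangle, and then invoke Lemma 3.3. The only difference is that you spell out the justification of $(Tf)\circ h=0$ (via $TR(3)$), which the paper leaves implicit.
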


\begin{proof}
By the condition $\overline{a}=0$, we have that $a$ factors through
an object $D_1 \in \cal D$. Since $f$ is a left $\cal D$$-monic$, we
have the following commutative diagram:
$$ \begin{array}{ccclcl}
A&\s{1}{\rightarrow}&A&\s{a}{\rightarrow}&A'\\
\downarrow f&&\downarrow d_1&&\downarrow 1\\
B&\s{d}{\rightarrow}&D_1&\s{b}{\rightarrow}&A'.\\
\end{array}$$
Hence $a=bd_1f$, and $h'c=Ta \circ h=T(bd_1f)h=T(bd_1)T(f)h=0$. By
Lemma 3.3, $c$ factors through $D$, so we get $\overline{c}=0$.
\end{proof}
\bigskip

\subsection{Right triangles on the quotient category}

In this subsection, $\C$ denotes a right triangulated category
satisfying Assumption 2.10.  Assume that $\cal D$$\subseteq \cal Z$
are subcategories of $\cal C$, $\cal D$ is factor-through-epic,
$\cal Z$ is extension-closed and satisfies $\cal Z=\mu (\cal Z;\cal
D)$. Then for any object $M \in \cal Z$, there exists a right
triangle
$$M\s{\alpha _M}{\rightarrow}D_M\s{\beta _M}{\rightarrow}\sigma M\s{\gamma _M}{\rightarrow}TM$$
with $\sigma M \in \cal Z$ and $D_M \in \cal D$, and moreover
$\alpha _M$ is a
left $\cal D-$approximation and $\beta _M$ is a right $\cal D-$approximation.\\

\begin{defn}
Let $M\s{\mu }{\rightarrow}N\s{\gamma
}{\rightarrow}P\s{\varphi}{\rightarrow} TM$ be a right triangle in
$\C$, where $\mu $ is $\cal D$$-monic$ and $M,N,P \in \cal Z$. Then
there exists a commutative diagram where the rows are right
triangles:
\[ \begin{CD}
M@>\mu >>N @>\gamma >>P@>\varphi >>TM\\
@VV1V@VVn V @VV\pi V @VV1V \\
M@>\alpha _M>>D_M@>\beta _M>>\sigma M@>\gamma _M>>TM.
    \end{CD} \]

Then we have the following sextuple in the quotient category $\cal Z
/ \cal D$:
$$(*): M\s{\overline{\mu}}{\rightarrow}N\s{\overline{\gamma}}{\rightarrow}P\s{\overline{\pi}}{\rightarrow}\sigma
M$$

 We define the right triangles in $\cal Z / \cal D$ as the sextuples
  which are isomorphic to $(*).$

\end{defn}

\begin{rem} It is easy to prove that $\sigma M$ is unique
up to isomorphism in the quotient category $\cal Z / \cal D$.
 So for any $M \in \cal Z$, we fix a right triangle
$$M\s{\alpha _M}{\rightarrow}D_M\s{\beta _M}{\rightarrow}\sigma M\s{\gamma _M}{\rightarrow}TM$$
In particular, for any $M \in \cal D$, we fix a right triangle
$$M\s{1}{\rightarrow}M\s{0}{\rightarrow}0\s{0}{\rightarrow}TM.$$\end{rem}
\bigskip

For any morphism $\mu \in Hom_{\cal Z}(M,N)$, where $M,N\in \Z$,
there exist $g$ and ${\mu}'$ which make the following diagram
commutative.
\[ \begin{CD}
M@>\alpha _M >>D_M @>\beta _M >>\sigma M@>\gamma _M>>TM\\
@VV\mu V@VVg V @VV{\mu}' V @VVT\mu V \\
N@>\alpha _N >>D_N @>\beta _N >>\sigma N@>\gamma _N>>TN.
    \end{CD} \]

We define an endofunctor $\sigma :{\cal Z} / {\cal D}\rightarrow
{\cal Z} / {\cal D}$ as follows: $\sigma :M\mapsto \sigma (M)$,
$\overline \mu \mapsto \overline {\mu'}$.

\begin{prop}
$\sigma :{\cal Z} / {\cal D}\rightarrow {\cal Z} / {\cal D}$ is an
additive functor.
\end{prop}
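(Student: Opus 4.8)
To show that $\sigma : \Z/\D \to \Z/\D$ is an additive functor, I must check four things: (a) $\sigma$ is well defined on objects (already fixed by Remark 3.8, once we recall that $\sigma M$ is unique up to isomorphism in $\Z/\D$); (b) $\sigma$ is well defined on morphisms, i.e. $\overline{\mu'}$ depends only on $\overline\mu$ and not on the chosen lift $\mu$ nor on the particular completion $(g,\mu')$ of the ladder; (c) $\sigma$ respects composition and identities; (d) $\sigma$ is additive, i.e. $\sigma(\overline\mu_1+\overline\mu_2)=\sigma\overline\mu_1+\sigma\overline\mu_2$ and $\sigma(M\oplus N)\cong \sigma M\oplus\sigma N$.

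For (b), the key observation is that $\alpha_M$ is a left $\D$-approximation (a left $\D$-monic), so I can invoke the lifting lemmas of Section 3.1. First, given $\mu$ fixed, suppose $(g_1,\mu_1')$ and $(g_2,\mu_2')$ both complete the ladder. Then $(0, g_1-g_2, \mu_1'-\mu_2')$ is a morphism of right triangles from the row $M\xrightarrow{\alpha_M}D_M\xrightarrow{\beta_M}\sigma M\xrightarrow{\gamma_M}TM$ to the row $N\xrightarrow{\alpha_N}D_N\xrightarrow{\beta_N}\sigma N\xrightarrow{\gamma_N}TN$ (the first vertical map is $0$ because $T\mu-T\mu=0$ and $T$ is faithful by Lemma 2.11, or more directly because both squares involving $\mu$ agree). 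Now Lemma 3.5 applies to this morphism: its left-hand row has $\alpha_M$ a left $\D$-monic, its right-hand row has middle term $D_N\in\D$, and the leftmost map $0$ certainly satisfies $\overline 0=0$ in $\Z/\D$; hence $\overline{\mu_1'-\mu_2'}=0$, i.e. $\overline{\mu_1'}=\overline{\mu_2'}$. Second, if $\mu$ is replaced by $\mu+d$ with $d$ factoring through some $D'\in\D$, I must show the resulting $\mu'$-part changes by something in $[\D]$. Writing $d=vu$ with $u:M\to D'$, $v:D'\to N$, and using that $\alpha_M$ is $\D$-monic, $u$ factors as $u=w\alpha_M$ for some $w:D_M\to D'$; then $(d, vw, 0)$ completes the ladder for the morphism $d$ (one checks $\gamma_N\cdot 0 = Td\cdot\gamma_M$ using $Td=Tv\cdot Tw\cdot T\alpha_M$ and $\gamma_M\alpha_M=0$ is replaced by the correct commutativity — here the third vertical entry being $0$ works because the composite $\beta_N\cdot vw$ and $0\cdot\beta_M$ agree after the standard chase, and $\gamma_N\cdot 0=0=T d\cdot \gamma_M$ since $Tu=Tw\,T\alpha_M$ kills $\gamma_M$). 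So the contribution of $d$ to the $\sigma$-part is $0$ in $\Z/\D$, and by additivity of the completion (choosing $g_{\mu+d}=g_\mu+g_d$, $\mu'_{\mu+d}=\mu'_\mu+\mu'_d$, legitimate since the ladder conditions are linear) we conclude $\overline{\mu'}$ is unchanged.

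For (c), composition: given $\mu:M\to N$ and $\nu:N\to P$ with chosen completions $(g_\mu,\mu')$ and $(g_\nu,\nu')$, the pair $(g_\nu g_\mu,\ \nu'\mu')$ completes the ladder for $\nu\mu$ (all squares commute by pasting), so by well-definedness $\sigma(\overline\nu\,\overline\mu)=\overline{\nu'\mu'}=\overline{\nu'}\,\overline{\mu'}=\sigma\overline\nu\cdot\sigma\overline\mu$; and the identity ladder for $1_M$ is completed by $(1_{D_M},1_{\sigma M})$, so $\sigma(\overline{1_M})=\overline{1_{\sigma M}}$. For (d), additivity on Hom-groups is immediate from the linearity of the ladder conditions and the construction of the completion, exactly as used in (b). For the direct-sum statement $\sigma(M\oplus N)\cong\sigma M\oplus\sigma N$: by Lemma 2.9 the direct sum of the fixed right triangles for $M$ and for $N$,
$$M\oplus N\xrightarrow{\alpha_M\oplus\alpha_N}D_M\oplus D_N\xrightarrow{\beta_M\oplus\beta_N}\sigma M\oplus\sigma N\xrightarrow{\gamma_M\oplus\gamma_N}T(M\oplus N),$$
is again a right triangle with middle term in $\D$, $\alpha_M\oplus\alpha_N$ a left $\D$-approximation and $\beta_M\oplus\beta_N$ a right $\D$-approximation; by the uniqueness up to isomorphism in Remark 3.8 this identifies $\sigma(M\oplus N)$ with $\sigma M\oplus\sigma N$ in $\Z/\D$, and one checks the inclusions and projections are carried to those of $\sigma M\oplus\sigma N$.

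**Main obstacle.** The delicate point is step (b), the independence of $\overline{\mu'}$ from all choices; this is where Assumption 2.10 (faithfulness of $T$, Lemma 2.11), the factor-through-epic hypothesis on $\D$, and Lemma 3.5 all get used, and one must be careful that the auxiliary completion built from a factorization of $d$ through $\D$ genuinely satisfies the commutativity of \emph{all three} squares of a morphism of right triangles, not merely the first two. Everything else — functoriality and additivity — then follows formally from the linearity of the defining diagram and Lemma 2.9.
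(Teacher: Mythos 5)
Your argument is correct and follows essentially the same route as the paper: the paper handles the only nontrivial point, well-definedness, by one application of its Lemma 3.4 to the difference of the two ladder diagrams for $\mu$ and $\mu_1$ with $\overline{\mu}=\overline{\mu_1}$, which is exactly the combination of your steps (b1) and (b2) --- your (b1) is that lemma applied with leftmost map $0$, and your (b2) re-derives its content by hand via the factorization $u=w\alpha_M$ and $T\alpha_M\circ\gamma_M=0$ (the middle vertical map should be $\alpha_N vw$ rather than $vw$, a harmless slip). The remaining verifications (c) and (d) are precisely what the paper dismisses as routine.
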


\begin{proof}
One can easily check that $\sigma$ satisfies the definition of the additive functor.
 We only prove that $\sigma$ is well-defined. Now assume $\mu ,\mu_1 \in Hom_{\cal Z}(M,N)$,
 $\overline{\mu}=\overline{\mu _1}$. Then we also have the commutative
 diagram where the rows are right triangles (compare to the commutative diagram before the proposition):
\[ \begin{CD}
M@>\alpha _M >>D_M @>\beta _M >>\sigma M@>\gamma _M>>TM\\
@VV\mu _1V@VVg' V @VV{\mu _1}' V @VVT\mu_1 V \\
N@>\alpha _N >>D_N @>\beta _N >>\sigma N@>\gamma _N>>TN.
    \end{CD} \]
 Since $\overline{\mu} -\overline{\mu _1}=0$, by Lemma 3.4, we have
 that $\overline{\mu'} =\overline{\mu _1'}$
\end{proof}

\begin{lem} Assume that we have a commutative diagram where the rows are right triangles in $\C$:
\[ \begin{CD}
M@>\mu >>N @>\gamma >>P@>\varphi >>TM\\
@VVfV@VVg V @VVh V @VV TfV \\
M'@>{\mu}'>>N'@>{\gamma}'>>P'@>{\varphi}'>>TM',
    \end{CD} \]
 where $M,N,P,M',N',P' \in \cal Z$ and $\mu$ and $ {\mu}'$ are ${\D}-monic$. Then we have the following commutative diagram in
$\cal Z / \cal D:$
\[ \begin{CD}
M@>\overline \mu >>N @>\overline \gamma >>P@>\overline \pi>>\sigma M\\
@VV\overline fV@VV\overline g V @VV\overline h V @VV \sigma (\overline f)V \\
M'@>\overline {\mu}'>>N'@>\overline {\gamma}'>> P'@>\overline
{\pi}'>>\sigma M'
    \end{CD} \]
\end{lem}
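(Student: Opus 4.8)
The plan is to split the statement into a trivial part and the one part that needs an argument. The two left-hand squares of the target diagram commute automatically: the given diagram already commutes in $\C$, so $g\mu=\mu'f$ and $h\gamma=\gamma'g$, and applying the quotient functor $\Z\to\Z/\D$ yields $\overline g\,\overline\mu=\overline{\mu'}\,\overline f$ and $\overline h\,\overline\gamma=\overline{\gamma'}\,\overline g$. Hence everything reduces to showing that the right-hand square commutes in $\Z/\D$, i.e. that $\sigma(\overline f)\circ\overline\pi=\overline{\pi'}\circ\overline h$.

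Next I would pin down the pieces of data defining the three maps involved. Since $\mu$ is $\D$-monic and $D_M\in\D$, the map $\alpha_M$ factors as $\alpha_M=n\mu$ for some $n\colon N\to D_M$, and then $TR(4)$ produces $\pi$ together with the morphism of right triangles
\[ \begin{CD}
M@>\mu>>N@>\gamma>>P@>\varphi>>TM\\
@VV1V@VVnV@VV\pi V@VV1V\\
M@>\alpha_M>>D_M@>\beta_M>>\sigma M@>\gamma_M>>TM
\end{CD} \]
of Definition 3.5; the map $\pi'$ is fixed in the same way from the bottom row. For the functor $\sigma$, since $\alpha_M$ is a left $\D$-approximation the map $\alpha_{M'}f$ factors through $\alpha_M$, and $TR(4)$ produces a representative $f'$ of $\sigma(\overline f)$ fitting in a morphism of right triangles
\[ \begin{CD}
M@>\alpha_M>>D_M@>\beta_M>>\sigma M@>\gamma_M>>TM\\
@VVfV@VVg_0V@VVf'V@VVTfV\\
M'@>\alpha_{M'}>>D_{M'}@>\beta_{M'}>>\sigma M'@>\gamma_{M'}>>TM'.
\end{CD} \]

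Now comes the one real step. Put $c:=f'\pi-\pi'h\colon P\to\sigma M'$. Reading off the rightmost square of the last two diagrams, of the diagram defining $\pi'$, and of the given diagram, one gets
\[ \gamma_{M'}\,f'\,\pi=Tf\,\gamma_M\,\pi=Tf\,\varphi=\varphi'\,h=\gamma_{M'}\,\pi'\,h, \]
so $\gamma_{M'}\circ c=0$. Applying Lemma 3.3 to the right triangle $M'\s{\alpha_{M'}}{\rightarrow}D_{M'}\s{\beta_{M'}}{\rightarrow}\sigma M'\s{\gamma_{M'}}{\rightarrow}TM'$ with $D_{M'}\in\D$ (note $\D$ is factor-through-epic by the standing assumptions of this subsection), we obtain $d\colon P\to D_{M'}$ with $c=\beta_{M'}d$. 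Thus $c\in[\D](P,\sigma M')$, so $\overline c=0$, that is $\overline{f'}\,\overline\pi=\overline{\pi'}\,\overline h$, which is precisely the commutativity of the right-hand square; together with the first paragraph this proves the lemma. I do not expect a genuine obstacle here: the only idea is to compose with $\gamma_{M'}$ so as to invoke Lemma 3.3, and the only thing requiring care is the bookkeeping of the four commutative squares and the composition convention, plus checking that the $\pi$, $\pi'$, $f'$ used are the ones fixed in Definition 3.5 and in the construction of $\sigma$.
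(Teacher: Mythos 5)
Your proposal is correct and follows essentially the same route as the paper: the two left squares commute for free under the quotient functor, and the right square is handled by computing $\gamma_{M'}(f'\pi-\pi'h)=Tf\varphi-\varphi'h=0$ and invoking Lemma 3.3 to conclude that $f'\pi-\pi'h$ factors through $D_{M'}\in\D$. The only difference is that you spell out the auxiliary diagrams and the standing hypotheses in more detail than the paper does.
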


\begin{proof}
Consider the following commutative diagrams where the rows are right
triangles in $\C$, and $\overline{f'}=\sigma (\overline f)$:
\[ \begin{CD}
M'@>{\mu}' >>N' @>{\gamma}' >>P'@>{\varphi}' >>TM'\\
@VV1V@VVn' V @VV{\pi}' V @VV V \\
M'@>\alpha _{M'}>>D_{M'}@>\beta _{M'}>>\sigma M'@>\gamma _{M'}>>TM'
    \end{CD} \]
and
\[ \begin{CD}
M@>\alpha _M >>D_M @>\beta _M >>\sigma M@>\gamma _M>>TM\\
@VVf V@VV V @VVf' V @VV TfV \\
M'@>\alpha _{M'} >>D_{M'} @>\beta _{M'} >>\sigma M'@>\gamma
_{M'}>>TM'.
    \end{CD} \]
 We have that
 $\gamma _{M'}(f'\pi-{\pi}'h)=Tf \circ \gamma _M \circ \pi-{\varphi}'h=Tf \circ \varphi -{\varphi}'h=0$.
  It follows from Lemma 3.3 that $\overline {f'\pi}=\overline {{\pi'}h}$. Then the following diagram commutes
\[ \begin{CD}
M@>\overline \mu >>N @>\overline \gamma >>P@>\overline \pi>>\sigma M\\
@VV\overline fV@VV\overline g V @VV\overline h V @VV \sigma (\overline f)V \\
M'@>\overline {\mu'}>>N'@>\overline {\gamma'}>> P'@>\overline
{\pi'}>>\sigma M'
    \end{CD} \]
\end{proof}

\subsection{Main theorem}

\begin{thm} Let $\C$ be a right triangulated category satisfying
Assumption 2.10. Assume that $\cal D$$\subseteq \cal Z$ are
subcategories of $\cal C$, $\cal D$ is factor-through-epic, $\cal Z$
is extension-closed and satisfies $\cal Z=\mu (\cal Z;\cal D)$. Then
the quotient category $\cal Z / \cal D$ forms a right triangulated
category with the additive functor $\sigma$ and the right triangles
defined in Def 3.5.
\end{thm}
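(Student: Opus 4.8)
The plan is to verify the axioms $TR(0)$--$TR(5)$ for the pair $(\mathcal Z/\mathcal D,\sigma)$ with the class of right triangles declared in Definition 3.5. The first bookkeeping point is that the class is closed under isomorphism of sextuples by construction, so $TR(0)$ is immediate. For $TR(1)$, I would take the right triangle $M\s{1}{\to}M\s{0}{\to}0\s{0}{\to}TM$ in $\mathcal C$; the morphism $1_M$ is trivially $\mathcal D$-monic, and comparing with the fixed right triangle $M\s{\alpha_M}{\to}D_M\s{\beta_M}{\to}\sigma M\s{\gamma_M}{\to}TM$ via $TR(4)$ produces the sextuple $M\s{\overline 1}{\to}M\s{\overline 0}{\to}0\s{\overline 0}{\to}\sigma M$, which is of the required form. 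The real content of $TR(2)$ is that \emph{every} morphism $\overline\mu\colon M\to N$ in $\mathcal Z/\mathcal D$ can be represented by a $\mathcal D$-monic morphism in $\mathcal C$: given any lift $\mu\colon M\to N$, replace it by $\binom{\mu}{\alpha_M}\colon M\to N\oplus D_M$ (using Lemma 2.9 to see $N\oplus D_M\in\mathcal Z$ and that this combined map is $\mathcal D$-monic since $\alpha_M$ already is), and note $\overline{\binom{\mu}{\alpha_M}}=\overline{(\mu,0)}$-type correction shows it represents $\overline\mu$ in the quotient; then extend the $\mathcal D$-monic representative to a right triangle in $\mathcal C$ with all three terms in $\mathcal Z$ (here extension-closedness of $\mathcal Z$ is used, via the standard argument that the cone of a map into $\mathcal Z$ from $\mathcal Z$ again lies in $\mathcal Z$ once one builds a triangle using $\mathcal D\subseteq\mathcal Z$), and feed it into Definition 3.5.

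For $TR(3)$ — the rotation axiom — I would start from a right triangle $M\s{\overline\mu}{\to}N\s{\overline\gamma}{\to}P\s{\overline\pi}{\to}\sigma M$ coming from a $\mathcal D$-monic $\mu$ in $\mathcal C$, and show that $N\s{\overline\gamma}{\to}P\s{\overline\pi}{\to}\sigma M\s{-\sigma(\overline\mu)}{\to}\sigma N$ is again of the declared form. The natural candidate is the rotation $N\s{\gamma}{\to}P\s{\varphi}{\to}TM\s{-T\mu}{\to}TN$ in $\mathcal C$ (a right triangle by $TR(3)$ in $\mathcal C$); the subtlety is that this does not literally look like a Definition 3.5 triangle because its third term is $TM$, not $\sigma N$, and $\gamma$ need not be $\mathcal D$-monic. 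To fix the $\mathcal D$-monicity I would again perturb $\gamma$ by an approximation as in the $TR(2)$ step, and to identify the cone-term I would use the defining right triangle for $M$ together with $TR(4)$/$TR(5)$ and Lemma 3.2, Lemma 3.3 to produce the comparison isomorphism to $\sigma N$ in the quotient, invoking Proposition 2.13 (the "two-out-of-three" for isomorphisms of right triangles) to conclude the comparison sextuple map is an isomorphism in $\mathcal Z/\mathcal D$. Assumption 2.10 enters here precisely so that the sign-rotated data still assembles into a genuine right triangle and so that $\sigma$ interacts correctly with rotation.

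$TR(4)$ follows from $TR(4)$ in $\mathcal C$ together with Lemma 3.7, which is exactly the statement that a morphism of right triangles in $\mathcal C$ with $\mathcal D$-monic first maps descends to a commuting morphism of the induced right triangles in $\mathcal Z/\mathcal D$; the only work is to first replace a given morphism of declared triangles in the quotient by an honest morphism of right triangles in $\mathcal C$ between $\mathcal D$-monic representatives, which is the same perturbation trick. Finally $TR(5)$, the octahedral axiom, is the main obstacle: I would take three composable $\mathcal D$-monic morphisms in $\mathcal C$ representing the three given morphisms in the quotient (one must check the composite can simultaneously be taken $\mathcal D$-monic, again by an $\alpha_{(-)}$-correction), apply the octahedron $TR(5)$ in $\mathcal C$ to get the large commutative diagram with its third column a right triangle in $\mathcal C$, and then translate every term and map down to $\mathcal Z/\mathcal D$, using Lemma 3.7 repeatedly for commutativity and Lemma 3.2, Lemma 3.3, Lemma 3.4 plus Proposition 2.13 to recognise the relevant columns as isomorphic in the quotient to Definition 3.5 triangles built from the fixed $\sigma(-)$ data. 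The bulk of the proof, and the place where the hypotheses (factor-through-epic for $\mathcal D$, extension-closed and $\mathcal Z=\mu(\mathcal Z;\mathcal D)$, and Assumption 2.10) all get used, is in making these identifications coherent; everything else is diagram-chasing of the kind already carried out in Lemmas 3.2--3.7.
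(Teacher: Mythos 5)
Your plan follows the same route as the paper: check $TR(0)$--$TR(5)$ directly, represent every morphism of $\mathcal{Z}/\mathcal{D}$ by a $\mathcal{D}$-monic morphism of $\mathcal{C}$ via the correction replacing $\mu$ by the map $M\rightarrow N\oplus D_M$ with components $\mu$ and $\alpha_M$, use the octahedral axiom of $\mathcal{C}$ together with Assumption 2.10 and extension-closedness of $\mathcal{Z}$ to build the required triangles, and descend everything with Lemmas 3.2--3.4 and Lemma 3.8. One small slip: for $TR(1)$ the axiom requires $0\rightarrow M\stackrel{1}{\rightarrow}M\rightarrow 0$ to be a right triangle, while your construction yields the rotated sextuple $M\stackrel{\overline{1}}{\rightarrow}M\rightarrow 0\rightarrow\sigma M$, and at this stage you cannot yet un-rotate inside $\mathcal{Z}/\mathcal{D}$; the repair is to apply Definition 3.5 to $0\rightarrow M\stackrel{1}{\rightarrow}M\rightarrow T0$ instead, since $0\rightarrow M$ is vacuously $\mathcal{D}$-monic and $\sigma 0=0$.

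The genuine gap is in $TR(5)$. After choosing $\mathcal{D}$-monic representatives $a$ and $d$ (so that $da$ is also $\mathcal{D}$-monic) and applying the octahedral axiom in $\mathcal{C}$, the third column $Z\stackrel{l}{\rightarrow}W\stackrel{i}{\rightarrow}V\rightarrow TZ$ is a right triangle of $\mathcal{C}$; but to recognise its image as a right triangle of $\mathcal{Z}/\mathcal{D}$ in the sense of Definition 3.5, and to apply Lemma 3.8 to the squares containing $l$, you need $l$ to be $\mathcal{D}$-monic. Unlike $a$ and $d$, the morphism $l$ is not chosen but produced by the octahedron, so the $\alpha_{(-)}$-correction is not available here, and your phrase ``recognise the relevant columns as isomorphic in the quotient to Definition 3.5 triangles'' silently assumes exactly what has to be proved. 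The paper supplies the missing argument: for any $j\colon Z\rightarrow D$ with $D\in\mathcal{D}$, the $\mathcal{D}$-monicity of $d$ gives $k\colon U\rightarrow D$ with $kd=jb$; then $kda=0$, so $k=mg$ for some $m\colon W\rightarrow D$ by Proposition 2.2; then $(ml-j)b=0$, so $ml-j=\alpha c'=\alpha h'l$ for some $\alpha\colon TX\rightarrow D$, whence $j=(m-\alpha h')l$. Without this step (or an equivalent) the verification of the octahedral axiom does not close. A lesser point of the same kind: in $TR(3)$ the identification of the rotated cone with $\sigma M$ and of the connecting morphism with $-\sigma(\overline{\mu})$ rests on using the factor-through-epic hypothesis to de-shift the components $\gamma_1\in \mathrm{Hom}_{\mathcal{C}}(TN,TD_M)$ and $\pi_2'+T\pi$ produced by the octahedron down to morphisms of $\mathcal{C}$ before Assumption 2.10 can be invoked; your sketch gestures at this, but that de-shifting is where the hypothesis actually does its work.
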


\begin{proof}
We will check that the right triangles in $\cal Z / \cal D$ defined
in Definition 3.5 satisfies the axioms of right triangulated
categories (see Definition 2.1). It follows from the definition of
right triangles in $\Z/\D$ that $TR(0)$ is satisfied.

For $TR(1):$ The commutative diagram
\[ \begin{CD}
0@>0>>M @>1>>M@>0>>0\\
@VVV@VV V @VVV @VV V \\
0@>0>>0@>0>>0@>0>>0
    \end{CD} \]
shows that $0\s{0}{\rightarrow}M\s{\overline 1}{\rightarrow}M\s{0}{\rightarrow}0$ is a right triangle.\\
For $TR(2):$ Let $\overline {\mu}:M\rightarrow N$ be any morphism in
$\Z/\D$. We have a morphism ${\mu}'= \left(\begin{array}{cclcl}
{\mu}\\
{\alpha}_M
\end{array}\right)$:$M\rightarrow N\oplus D_M$ which is ${\cal D}-monic$, where $\alpha_M: D_M\rightarrow M$
is a left $\D-$approximation. Suppose that
$M\s{{\mu}'}{\rightarrow}N\oplus D_M\rightarrow P'\rightarrow TM$ is
a right triangle in $\C$ which contains the morphism $\mu'$ as a
part. It follows from Lemma 2.9 and the octahedral axiom that we
have the following commutative diagram where the first two rows and
the second column are right triangles:
$$ \begin{array}{ccccclclclcl}
M&\s{{\mu}'}{\rightarrow}&N\oplus D_M&\s{{\gamma}'}{\rightarrow}&P'&\rightarrow&TM\\
\| &&\downarrow p_{D_M}&&\downarrow &&\| &&\\
M&\s{\alpha _M}{\rightarrow}&D_M&\rightarrow&\sigma M&\rightarrow&TM\\
\ &&\downarrow &&\downarrow &&\ &&\\
&&TN&=&TN&&\\
\ &&\downarrow -Ti_N&&\downarrow &&\ &&\\
&&TN\oplus TD_M&\s{T{\gamma}'}{\rightarrow}&TP'&&\\
\end{array}$$
It also follows the third column is a right triangle. Then by the
Assumption 2.10, $N\s{{\gamma}'i_N}{\rightarrow}P'\rightarrow \sigma
M\rightarrow TN$ is a right triangle in $\C$, where $N,\sigma M \in
\cal Z$.  Since $\cal Z$ is extension-closed, we have that $P' \in
\cal Z$. Thus there is a right triangle $M\s{\overline \mu
}{\rightarrow}N\rightarrow P'\rightarrow \sigma (M)$ in  $\cal Z /
\cal D$ which contains
$\overline{\mu}$ as a part.\\
For $TR(3):$ Let $M\s{\overline \mu}{\rightarrow}N\s{\overline
\gamma}{\rightarrow}P\s{\overline \pi}{\rightarrow} \sigma M$ be a
right triangle in $\cal Z / \cal D$. We assume that it is induced by
the right triangle in $\C$:
$M\s{\mu}{\rightarrow}N\s{\gamma}{\rightarrow}P\s{\varphi}{\rightarrow}TM$.
Considering the commutative diagram in Def 3.5, we have $T\alpha _M
\circ \varphi=Tn \circ T\mu \circ \varphi=0$. By the octahedral
axiom and Lemma 2.9, we have the following commutative diagrams
$$ \begin{array}{ccccclclclcl}
P&\s{\varphi}{\rightarrow}&TM&\s{-T\mu}{\longrightarrow}&TN&\s{-T\gamma }{\longrightarrow}&TP\\
\| &&\big\downarrow &&\big\downarrow {\gamma}'&&\| &&\\
P&\s{0}{\rightarrow}&TD_M&\s{\left(\begin{array}{cclcl}
1\\
0
\end{array}\right)}{\longrightarrow}&TD_M\oplus TP&\s{\left(\begin{array}{cccl}
0&1\\
\end{array}\right)}{\longrightarrow}&TP\\
\ &&\big\downarrow &&\big\downarrow {\pi}'&&\ &&\\
&&T\sigma M&=&T\sigma M&&\\
\ &&\big\downarrow &&\big\downarrow &&\ &&\\
&&T^2M&\s{-T^2\mu}{\rightarrow}&T^2N&&\\
\end{array}$$
and
\[ \begin{CD}
P@>0>>TD_M @>\left(\begin{array}{cclcl}
1\\
0
\end{array}\right)>>TD_M\oplus TP@>{\left(\begin{array}{cccl}
0&1\\
\end{array}\right)}>>TP\\
@VV \varphi V@VV 1V @VV {\pi}'V @VVT\varphi V \\
TM@>-T{\alpha}_M>>TD_M@>-T{\beta}_M>>T\sigma M@>-T{\gamma}_M>>T^2M.
    \end{CD} \]
Write  ${\gamma}'$ as ${\gamma}'= \left(\begin{array}{cclcl}
{\gamma _1}\\
{\gamma _2}
\end{array}\right)$. Then $-T\gamma =
\left(\begin{array}{cccl}
0&1\\
\end{array}\right)$
$\left(\begin{array}{cclcl}
{\gamma _1}\\
{\gamma _2}
\end{array}\right)$$={\gamma _2}$.
Since ${\gamma _1} \in Hom_{\C}(TN,TD_M)$ and $\cal D$ is
factor-through-epic, there exists ${\gamma _1}'\in Hom_{\C}(N,D_M)$
such that $T{\gamma _1}'={\gamma _1}$. Write ${\pi}'$ as ${\pi}'=
\left(\begin{array}{cccl}
{\pi_1}'&{\pi _2}'\\
\end{array}\right)$, then $\left(\begin{array}{cccl}
{\pi_1}'&{\pi _2}'\\
\end{array}\right)$
$\left(\begin{array}{cclcl}
1\\
0
\end{array}\right)$$={\pi _1}'=-T\beta _M$, $-T\gamma _M
\left(\begin{array}{cccl}
{\pi_1}'&{\pi _2}'\\
\end{array}\right)$
$=\left(\begin{array}{cccl}
{-T\gamma _M \circ \pi_1}'&{-T\gamma _M \circ \pi _2}'\\
\end{array}\right)$
$=\left(\begin{array}{cccl}
0&{-T\gamma _M \circ \pi _2}'\\
\end{array}\right)$
$=\left(\begin{array}{cccl}
0&T\varphi\\
\end{array}\right)$. Then $-T\gamma _M \circ {\pi _2}'=T\varphi$. Since $T\gamma_M \circ ({\pi _2}' +T\pi)=0$,
by Lemma 3.3, ${\pi _2}' +T\pi$ factors through $TD_M$, there exists
$\pi _2$ such that $T\pi _2={\pi _2}' +T\pi$. By the commutative
diagram before Proposition 3.7, we have the following commutative
diagram where the rows are right triangles in $\C$:
\[ \begin{CD}
\sigma M@>-T\mu \circ \gamma _M>>TN@>{\gamma}'>>TD_M\oplus TP@>{\pi}'>>T\sigma M\\
@VV -{\mu}' V@VV 1V @VV V @VV-T{{\mu}'} V \\
\sigma N@>\gamma _N>>TN@>-T\alpha_N>>TD_N@>-T\beta_N>>T\sigma N
    \end{CD} \]
By the Assumption 2.10, we have the following commutative diagram
where the rows are right triangles:
\[ \begin{CD}
N@>{\left(\begin{array}{cclcl}
-{\gamma _1}'\\
\gamma
\end{array}\right)}>>D_M\oplus P@>{(\beta _M,\pi -\pi _2)}>>\sigma M@>-T\mu \circ \gamma _M>>TN\\
@VV 1V@VV V @VV -{\mu}'V @VV 1V \\
N@>\alpha _N >>D_N @>\beta _N >>\sigma N@>\gamma _N>>TN,
    \end{CD} \]
where $\left(\begin{array}{cclcl}
-{\gamma _1}'\\
\gamma
\end{array}\right)$ $\cal D$$-monic$. Then
$N\s{\overline \gamma}{\rightarrow}P\s{\overline
\pi}{\rightarrow}\sigma M\s{-\sigma (\overline \mu)=-\overline
{\mu'}}{\longrightarrow}\sigma N$
is a right triangle.\\
For $TR(4):$ Suppose there is a commutative diagram where the rows
are right triangles in $\cal Z / \cal D$:
$$ \begin{array}{llcccclcl}
&&A&\s{\overline f}{\rightarrow}&B&\s{\overline g}{\rightarrow}&C&\s{\overline h}{\rightarrow}&\sigma A\\
(*)&&\downarrow \overline a&&\downarrow \overline b&&\ &&\downarrow \sigma (\overline a)\\
&&A'&\s{\overline f'}{\rightarrow}&B'&\s{\overline g'}{\rightarrow}&C'&\s{\overline h'}{\rightarrow}&\sigma A'.\\
\end{array}$$ By the definition of right triangles of $\Z/\D$ in Def 3.5, there exists a(not necessarily
commutative)diagram where the rows are right triangles in $\C$:
$$ \begin{array}{llcccclcl}
&&A&\s{f}{\rightarrow}&B&\s{g}{\rightarrow}&C&\s{h}{\rightarrow}&TA\\
(**)&&\downarrow a&&\downarrow b&&\ &&\downarrow Ta\\
&&A'&\s{f'}{\rightarrow}&B'&\s{g'}{\rightarrow}&C'&\s{h'}{\rightarrow}&TA'.\\
\end{array}$$ Since $\overline {f'a}=\overline {bf}$, the morphism $bf -f'a$ factors
through an object $D \in \cal D$. Since $f$ is $\cal D$$-monic$, we
have the following commutative diagram
$$ \begin{array}{cccccl}
A&\s{1}{\leftarrow}&A&\s{1}{\rightarrow}&A&\\
\downarrow f&&\downarrow \alpha_3&&\downarrow & bf -f'a\\
B&\s{\alpha _1}{\rightarrow}&D&\s{\alpha _2}{\rightarrow}&B'&\\
\end{array}$$
Denote $\alpha _2 \alpha _1$ by $s$. We have $sf=bf -f'a$. After
replacing $b-s$ by $b$ in the diagram $(**)$,  whose images in
$\Z/\D$ are the same, we can assume that $bf =f'a$. Therefore there
exists a morphism $c: C\rightarrow C'$ such that $(a,b,c)$ is a
morphism of right triangles in $\C$.
It follows from Lemma 3.8 that $(\overline{a},\overline{b},\overline{c})$ is a morphism of triangles in $(*)$.\\
For $TR(5):$ Let $X\s{\overline a}{\rightarrow}Y\s{\overline
b}{\rightarrow}Z\s{\overline c}{\rightarrow}TX$, $Y\s{\overline
d}{\rightarrow}U\s{\overline e}{\rightarrow}V\s{\overline
f}{\rightarrow}TY$ , and $X\s{\overline
{da}}{\rightarrow}U\s{\overline g}{\rightarrow}W\s{\overline
h}{\rightarrow}TX$ be right triangles in $\cal Z / \cal D$. By the
proof for $TR(2)$ above, we may assume that $a$ and $d$ are ${\cal
D}-monic$. Then $da$ is also ${\cal D}-monic$. Hence we have the
following three right triangles in $\C$:
 $X\s{a}{\rightarrow}Y\s{b}{\rightarrow}Z\s{c'}{\rightarrow}TX$,
$Y\s{d}{\rightarrow}U\s{e}{\rightarrow}V\s{f'}{\rightarrow}TY$,
$X\s{da}{\rightarrow}U\s{g}{\rightarrow}W\s{h'}{\rightarrow}TX$. By
octahedral axiom, we have the following commutative diagrams in
$\C$:
$$ \begin{array}{ccccclclclcl}
X&\s{a}{\rightarrow}&Y&\s{b}{\rightarrow}&Z&\s{c'}{\rightarrow}&TX\\
\| &&\downarrow d&&\downarrow l&&\| &&\\
X&\s{da}{\rightarrow}&U&\s{g}{\rightarrow}&W&\s{h'}{\rightarrow}&TX\\
\ &&\downarrow e&&\downarrow i&&\ &&\\
&&V&=&V&&\\
\ &&\downarrow f'&&\downarrow &&\ &&\\
&&TY&\s{Tb}{\rightarrow}&TZ&&\\
\end{array}$$
and
\[ \begin{CD}
X@>da>>U @>g>>W@>h'>>TX\\
@VVaV@VV1 V @VViV @VVTa V \\
Y@>d>>U@>e>>V@>f'>>TY
    \end{CD} \]
We will show that $l$ is $\D -monic$. Let $j:Z\rightarrow D$ be any
morphism, where $ D \in \D$. Since $d$ is $\D -monic$, there exists
a morphism $k:U\rightarrow D$ such that $kd=jb$. Then $kda=jba=0$.
By Prop 2.2, there exists a morphism $m: W\rightarrow D$ such that
$mg=k$. Now we get $mlb=mgd=kd=jb,$ thus $(ml-j)b=0$. Then there
exists a morphism $\alpha :TX\rightarrow D$ such that $ml-j=\alpha
c'=\alpha h'l$. Then $j=(m-\alpha h')l$. Now by Lemma 3.8, we have
the following commutative diagrams in $\Z/\D$ where the rows are
right triangles:
$$ \begin{array}{ccccclclclcl}
X&\s{\overline a}{\rightarrow}&Y&\s{\overline b}{\rightarrow}&Z&\s{\overline c}{\rightarrow}&\sigma X\\
\| &&\downarrow \overline d&&\downarrow \overline l&&\| &&\\
X&\s{\overline {da}}{\rightarrow}&U&\s{\overline g}{\rightarrow}&W&\s{\overline h}{\rightarrow}&\sigma X\\
\ &&\downarrow \overline e&&\downarrow \overline i&&\ &&\\
&&V&=&V&&\\
\ &&\downarrow \overline f&&\downarrow &&\ &&\\
&&\sigma Y&\s{\sigma (\overline b)}{\rightarrow}&\sigma Z&&\\
\end{array}$$
and
\[ \begin{CD}
X@>\overline {da}>>U @>\overline g>>W@>\overline h>>\sigma X\\
@VV\overline aV@VV1 V @VV\overline iV @VV {\sigma}(\overline a)V \\
Y@>\overline d>>U@>\overline e>>V@>\overline f>>\sigma Y
    \end{CD} \]

    Therefore the quotient category $\Z/\D$ is a right triangulated category with shift functor
    $\sigma$.
\end{proof}

\begin{cor} Assume that $\C$, $\D$ and $\Z$ satisfy the same
conditions as in Theorem 3.9. Let $\mu \in Hom_{\cal Z}(M,N)$ be
$\cal D$$-monic$, where $M,N\in \Z$. If $M\s{\mu
}{\rightarrow}N\s{\gamma }{\rightarrow}P\s{\varphi}{\rightarrow} TM$
is a right triangle in $\cal C$, then $P\in \cal Z$.
\end{cor}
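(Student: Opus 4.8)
The plan is to reduce the statement to the special case already settled inside the proof of $TR(2)$ of Theorem 3.9, namely the case of the morphism $\mu'=\binom{\mu}{\alpha_M}\colon M\to N\oplus D_M$, and then to recover $P$ as a direct summand of a cone of $\mu'$.

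First I would fix, as usual, the right triangle $M\s{\alpha_M}{\rightarrow}D_M\s{\beta_M}{\rightarrow}\sigma M\s{\gamma_M}{\rightarrow}TM$ with $D_M\in\D$, $\sigma M\in\Z$ and $\alpha_M$ a left $\D$-approximation. Since $\mu$ is $\D$-monic and $\alpha_M\in\Hom_{\C}(M,D_M)$ with $D_M\in\D$, there is a morphism $n\colon N\to D_M$ with $n\mu=\alpha_M$. Put $\mu'=\binom{\mu}{\alpha_M}\colon M\to N\oplus D_M$. As observed in the proof of $TR(2)$, $\mu'$ is $\D$-monic, and if $M\s{\mu'}{\rightarrow}N\oplus D_M\s{\gamma'}{\rightarrow}P'\s{\varphi'}{\rightarrow}TM$ is any right triangle in $\C$ extending $\mu'$, then $P'\in\Z$. (Any two choices of such $P'$ are isomorphic by $TR(4)$ and Proposition 2.14, so the assertion $P'\in\Z$ does not depend on the choice, $\Z$ being closed under isomorphisms.)

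Next I would compare the cones of $\mu$ and $\mu'$. Let $\theta=\binom{1_N}{n}\colon N\to N\oplus D_M$ and let $p_N\colon N\oplus D_M\to N$ be the projection. Then $\theta\mu=\binom{\mu}{n\mu}=\binom{\mu}{\alpha_M}=\mu'$, $p_N\mu'=\mu$, and $p_N\theta=1_N$. Hence the squares with left column $1_M$ and right columns $\theta$, resp. $p_N$, commute, and by $TR(4)$ there are morphisms of right triangles $(1_M,\theta,c_1)$ from $M\s{\mu}{\rightarrow}N\s{\gamma}{\rightarrow}P\s{\varphi}{\rightarrow}TM$ to $M\s{\mu'}{\rightarrow}N\oplus D_M\s{\gamma'}{\rightarrow}P'\s{\varphi'}{\rightarrow}TM$, and $(1_M,p_N,c_2)$ in the opposite direction. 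Composing them yields the morphism $(1_M,1_N,c_2c_1)$ from the right triangle of $\mu$ to itself, so by Proposition 2.14 the morphism $c_2c_1\colon P\to P$ is an isomorphism. Therefore $c_1$ is a split monomorphism and $P$ is a direct summand of $P'$; since $P'\in\Z$, $\C$ is Krull-Schmidt and $\Z$ is closed under direct summands, we conclude $P\in\Z$.

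The step I expect to be the main obstacle is precisely this comparison of the two cones. A direct attack via a single octahedral diagram in the style of the $TR(2)$ argument, applied to $M\s{\mu}{\rightarrow}N\s{n}{\rightarrow}D_M$, produces a right triangle of the form $P\to\sigma M\to V\to TP$ in which $V$ is a cone of $n$; one cannot control $V$, nor rotate the superfluous term away using Assumption 2.10, because $\Z$ is not assumed to be closed under $T$. Factoring $\mu$ through $\mu'$ and back, and invoking Proposition 2.14, is what lets one bypass this difficulty. Note that the hypothesis that $\mu$ is $\D$-monic is used exactly twice: to produce the lift $n$ with $n\mu=\alpha_M$, and (implicitly, via the proof of $TR(2)$) to guarantee that $\mu'$ is $\D$-monic, which is what forces $P'\in\Z$.
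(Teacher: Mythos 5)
Your proposal is correct and follows essentially the same route as the paper: both factor $\mu$ through $\mu'=\binom{\mu}{\alpha_M}$ via the maps $\binom{1_N}{n}$ and the projection onto $N$, whose composite is $1_N$, and then apply the proposition on morphisms of right triangles with two isomorphisms (Proposition 2.13 in the paper's numbering, not 2.14) to conclude that the induced endomorphism of $P$ is an isomorphism, so $P$ is a direct summand of $P'\in\Z$. No substantive differences.
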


\begin{proof}
According to the proof of Theorem 3.9, for any morphism $\mu \in
Hom_{\cal Z}(M,N)$, there exists a right triangle
$M\s{{\mu}'}{\rightarrow}N\oplus D_M\rightarrow P'\rightarrow TM$
with ${\mu}'= \left(\begin{array}{cclcl}
{\mu}\\
{\alpha}_M
\end{array}\right)$ and $P' \in \cal Z$. Suppose $M\s{\mu }{\rightarrow}N\s{\gamma }{\rightarrow}P\s{\varphi}{\rightarrow} TM$
 is the right triangle in $\cal C$ containing $\mu$ as a part. We have the following commutative diagrams (compare the Definition
 3.5), where the rows are right triangles in $\C$:

$$ \begin{array}{ccccclcl}
M&\s{\mu}{\rightarrow} &N&\rightarrow &P&\rightarrow &TM\\
\| &&\downarrow  n'&&\downarrow g_1&&\| \\
M&\s{{\mu}'}{\rightarrow} &N\oplus D_M&\rightarrow &P'&\rightarrow &TM\\
\| &&\downarrow  f&&\downarrow g_2&&\| \\
M&\s{\mu}{\rightarrow} &N&\rightarrow &P&\rightarrow &TM,\\
\end{array}$$

where $n'= \left(\begin{array}{cclcl}
1_N\\
n
\end{array}\right)$ and $f=\left(\begin{array}{cccl}
1_N&0\\
\end{array}\right)$.
Since $fn'=1_N$, by Prop 2.13, we have that $g_2g_1$ is an
isomorphism. Thus P is a direct summand of $P'$. Therefore $P \in
\cal Z.$
\end{proof}
\bigskip

\subsection{Triangulated structure on the quotient category $\cal Z / \cal D$}

\begin{thm} Let $\C$ be a right triangulated category satisfying
Assumption 2.10. Assume that $\cal D$$\subseteq \cal Z$ are
subcategories of $\cal C$, $\cal D$ is factor-through-epic, $\cal Z$
is extension-closed and $(\Z,\Z)$ is a $\D-$mutation pair. If the
restriction of the shift functor $T$ to $\Z$, $T|_{\cal Z}:{\cal
Z}\rightarrow T\cal Z$ is full, then the right triangulated category
$\cal Z / D$ is a triangulated category.
\end{thm}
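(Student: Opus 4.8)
The plan is to start from the right triangulated category $(\Z/\D,\sigma)$ produced by Theorem 3.9 — whose hypotheses are all contained in the present ones, since a $\D$-mutation pair $(\Z,\Z)$ in particular satisfies $\Z=\mu(\Z;\D)$ — and to show that the extra assumption forces the shift functor $\sigma\colon\Z/\D\to\Z/\D$ to be an equivalence. Since a right triangulated category whose shift functor is an equivalence is a triangulated category (as recalled after Definition 2.1, cf. [ABM]), this is exactly what must be checked. So the whole proof reduces to showing that $\sigma$ is faithful, dense, and full.

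Faithfulness is immediate from Lemma 3.2, and density uses only $\Z=\mu^{-1}(\Z;\D)$. For $\overline\mu\colon M\to N$, pick a lift to a morphism of right triangles $(\mu,g,\mu')$ from $M\s{\alpha_M}{\rightarrow}D_M\s{\beta_M}{\rightarrow}\sigma M\s{\gamma_M}{\rightarrow}TM$ to $N\s{\alpha_N}{\rightarrow}D_N\s{\beta_N}{\rightarrow}\sigma N\s{\gamma_N}{\rightarrow}TN$, so that $\sigma(\overline\mu)=\overline{\mu'}$; here $D_M\in\D$ and $\beta_N$ is a right $\D$-approximation, hence $\D$-epic, so Lemma 3.2 applied to this morphism of right triangles gives $\overline{\mu'}=0\Rightarrow\overline\mu=0$. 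For density, let $N\in\Z$: either $N\in\D$, in which case $N\cong 0=\sigma(0)$ in $\Z/\D$, or there is a right triangle $X\s{f}{\rightarrow}D\s{g}{\rightarrow}N\s{h}{\rightarrow}TX$ with $X\in\Z$, $D\in\D$, $f$ a left $\D$-approximation and $g$ a right $\D$-approximation; this is precisely a right triangle of the shape defining $\sigma X$, so by the uniqueness of $\sigma X$ up to isomorphism in $\Z/\D$ (Remark 3.6) we get $N\cong\sigma X$.

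The real content is fullness, and this is the only place the hypothesis ``$T|_{\cal Z}$ full'' enters. Fix $M,N\in\Z$ and $\overline\nu\colon\sigma M\to\sigma N$ with representative $\nu$ in $\C$. Since $\beta_N$ is $\D$-epic and $D_M\in\D$, we may write $\nu\beta_M=\beta_N\delta$ for some $\delta\colon D_M\to D_N$. Rotating the fixed triangles by $TR(3)$ gives right triangles $D_M\s{\beta_M}{\rightarrow}\sigma M\s{\gamma_M}{\rightarrow}TM\s{-T\alpha_M}{\longrightarrow}TD_M$ and $D_N\s{\beta_N}{\rightarrow}\sigma N\s{\gamma_N}{\rightarrow}TN\s{-T\alpha_N}{\longrightarrow}TD_N$, and $(\delta,\nu)$ is a commuting square on their first two terms; by $TR(4)$ there is $w\colon TM\to TN$ with $(\delta,\nu,w)$ a morphism of these rotated triangles. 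As $M,N\in\Z$ and $T|_{\cal Z}$ is full, $w=T\mu$ for some $\mu\colon M\to N$; since $T$ is faithful (Lemma 2.11), commutativity of the last square, $-T\alpha_N\circ w=T\delta\circ(-T\alpha_M)$, forces $\alpha_N\mu=\delta\alpha_M$, so $(\mu,\delta,\nu)$ is in fact a morphism of right triangles from $M\s{\alpha_M}{\rightarrow}D_M\s{\beta_M}{\rightarrow}\sigma M\s{\gamma_M}{\rightarrow}TM$ to $N\s{\alpha_N}{\rightarrow}D_N\s{\beta_N}{\rightarrow}\sigma N\s{\gamma_N}{\rightarrow}TN$. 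By the definition of $\sigma$ on morphisms together with Proposition 3.7, which makes it independent of the chosen lift, $\sigma(\overline\mu)=\overline\nu$. Hence $\sigma$ is full, therefore an equivalence, and $\Z/\D$ is a triangulated category.

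I expect fullness to be the main obstacle: $TR(4)$ only produces a morphism of the \emph{rotated} triangles and with no uniqueness, so one must use fullness of $T|_{\cal Z}$ to pull $w$ back to a morphism $\mu$ of the original objects, faithfulness of $T$ to verify the remaining square, and well-definedness of $\sigma$ on morphisms to conclude — a chain of bookkeeping that has to be run carefully. An alternative organization would build a quasi-inverse $\sigma^{-}$ directly out of the equality $\Z=\mu^{-1}(\Z;\D)$ and verify $\sigma\sigma^{-}\cong\mathrm{id}\cong\sigma^{-}\sigma$ in $\Z/\D$ using the uniqueness in Remark 3.6; but making $\sigma^{-}$ functorial requires essentially the same lifting argument, so little is gained.
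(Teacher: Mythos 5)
Your proof is correct, but it is organized differently from the paper's. The paper builds an explicit candidate quasi-inverse $\omega$: using $\Z=\mu(\Z;\D)$ it fixes for each $X\in\Z$ a right triangle $\omega X\xrightarrow{\alpha^X} D^X\xrightarrow{\beta^X} X\xrightarrow{\gamma^X} T\omega X$, makes $\omega$ functorial on $\Z/\D$ by exactly the lifting step you use for fullness (complete to a morphism of the rotated triangles, lift the resulting map $T\omega X\to T\omega Y$ through $T|_{\Z}$, get well-definedness from Lemma 3.2), and then verifies $\sigma\omega\cong\mathrm{id}$ and $\omega\sigma\cong\mathrm{id}$ by comparing the two fixed triangles over each $X$ and using Lemma 3.3 to see that the comparison maps $g_X$, $g_X'$ are mutually inverse in $\Z/\D$. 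You instead check directly that $\sigma$ is faithful (Lemma 3.2), dense (the condition $\Z=\mu^{-1}(\Z;\D)$ plus the uniqueness in Remark 3.6), and full (the $TR(3)$/$TR(4)$ rotation, fullness of $T|_{\Z}$ to pull $w$ back to $\mu$, faithfulness of $T$ from Lemma 2.11 to verify the first square, and Proposition 3.7 to see that $\sigma(\overline\mu)$ does not depend on the chosen completion $(\mu,\delta,\nu)$). The technical core is identical --- the hypothesis that $T|_{\Z}$ is full can only enter through such a lifting --- but your decomposition makes density and faithfulness one-line citations and avoids the paper's somewhat delicate naturality check for $X\cong\sigma\omega X$, whereas the paper's route produces an explicit loop functor $\omega$ on $\Z/\D$, which is of some independent interest.
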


\begin{proof}
By Theorem 3.9, we only need to show that the shift functor $\sigma$
in $\Z/\D$ is an equivalence. Since $(\cal Z,Z)$ is a ${\cal
D}-$mutation pair, for any object $X \in \cal Z$, we fix a right
triangle in $\C$: $\omega X \s{\alpha ^X}{\rightarrow}D ^X\s{\beta
^X}{\rightarrow}X\s{\gamma ^X}{\rightarrow}T\omega X$ with $\beta^X$
being a right $\D-$approximation and $\alpha^X$ being a left
$\D-$approximation. For any morphism $f: X\rightarrow Y$, there
exist $g$ and $h$ such that $(g,f,h)$ is a morphism of right
triangles (i.e. the following diagram commutes):
\[ \begin{CD}
D^X@>\beta ^X >>X @>\gamma ^X>>T\omega X@>-T\alpha ^X >>TD^X\\
@VVgV@VVfV @VV hV @VV V \\
D^Y@>{\beta ^Y}>>Y@>{\gamma ^Y}>>T{\omega Y}@>-T\alpha ^Y>>T{D^Y}
    \end{CD} \]
By the fullness of the functor $T|_{\cal Z}$, there exists $h' \in
Hom_{\C}(\omega X,{\omega X}')$ such that $Th'=h$. Then we have the
following commutative diagram:
\[ \begin{CD}
\omega X@>\alpha ^X>>D^X@>\beta ^X >>X @>\gamma ^X>>T\omega X\\
@VVh'V@VVgV @VV fV @VV Th'V \\
\omega Y@>\alpha ^Y>>D^Y@>\beta ^Y>>Y@>\gamma ^Y>>T\omega Y.
    \end{CD} \] Now we define $\omega: {\cal Z / D}\rightarrow {\cal Z /
    D}$ as a functor which sends $X$ to $\omega X$, sends
    $\overline{f}$ to $\overline{h'}$. By using Lemma 3.2, one can prove that $\omega :{\cal Z /
D}\rightarrow {\cal Z / D}$ is an additive functor in a way dual to
the construction of $\sigma$ (compare Proposiotion 3.7 and its
proof).  Then we have the following commutative diagram in $\C$:
$$ \begin{array}{ccccclcl}
\omega X&\s{\alpha^X}{\rightarrow} &D^{X}&\s{\beta^X}{\rightarrow} &X&\s{\gamma ^X}{\rightarrow }&T\omega X\\
\| &&\downarrow  &&\downarrow g_X&&\| \\
\omega X&\s{\alpha_{\omega X}}{\rightarrow }&D_{\omega
X}&\s{\beta_{\omega X}}{\rightarrow} &\sigma \omega
X&\s{\gamma_{\omega X}}{\rightarrow }
&T\omega X\\
\| &&\downarrow  &&\downarrow g'_X&&\| \\
\omega X&\s{\alpha^X}{\rightarrow} &D^{X}&\s{\beta^X}{\rightarrow}
&X&\s{\gamma ^X}{\rightarrow }&T\omega X.\\
\end{array}$$
It follows from the commutative diagram that $\gamma ^X(1_X-g'g)=0$
and $\gamma_{\omega X}(1_{\sigma\omega X}-gg')=0$. By using Lemma
3.3, one can get that $\overline g_X \circ \overline
{g_X}'=\overline 1_{\sigma \omega X}$, $\overline {g_X}' \circ
\overline g_X=\overline 1_X$. Then $X \cong \sigma \omega X$ in
$\Z/\D$. For any morphism $f: X\rightarrow Y$, we also have the
following commutative diagram:
$$ \begin{array}{ccccclcl}

\omega X&\s{\alpha_{\omega X}}{\rightarrow }&D_{\omega
X}&\s{\beta_{\omega X}}{\rightarrow} &\sigma \omega
X&\s{\gamma_{\omega X}}{\rightarrow }
&T\omega X\\
\| &&\downarrow  &&\downarrow g'_X&&\| \\
\omega X&\s{\alpha^X}{\rightarrow} &D^{X}&\s{\beta^X}{\rightarrow} &X&\s{\gamma ^X}{\rightarrow }&T\omega X\\
\downarrow h'&&\downarrow  &&\downarrow f&&\downarrow \\
\omega Y&\s{\alpha^Y}{\rightarrow} &D^Y&\s{\beta^Y}{\rightarrow} &Y&\s{\gamma^Y}{\rightarrow} &T\omega Y\\
\| &&\downarrow  &&\downarrow g_Y&&\| \\

\omega Y&\s{\alpha_{\omega Y}}{\rightarrow }&D_{\omega
Y}&\s{\beta_{\omega Y}}{\rightarrow} &\sigma \omega
Y&\s{\gamma_{\omega Y}}{\rightarrow }
&T\omega Y.\\
\end{array}$$

Then we get the following commutative diagram

$$ \begin{array}{ccccclcl}

\omega X&\s{\alpha_{\omega X}}{\rightarrow }&D_{\omega
X}&\s{\beta_{\omega X}}{\rightarrow} &\sigma \omega
X&\s{\gamma_{\omega X}}{\rightarrow }
&T\omega X\\
\downarrow h'&&\downarrow  &&\downarrow f''&&\| \\
\omega Y&\s{\alpha_{\omega Y}}{\rightarrow }&D_{\omega
Y}&\s{\beta_{\omega Y}}{\rightarrow} &\sigma \omega
Y&\s{\gamma_{\omega Y}}{\rightarrow }
&T\omega Y.\\
\end{array}$$

where  $f''=g_Y \circ f \circ g'_X.$ Then we have that $\overline
{f''}=\sigma {\overline h'}=\sigma \omega {\overline f}$ in $\Z/\D$.
We also have the following commutative diagram in $\Z/\D$:
$$ \begin{array}{cccl}
X&\s{\overline g_X}{\rightarrow}&\sigma \omega X\\
\downarrow \overline f&&\downarrow \overline {{f'}'}\\
Y&\s{\overline g_Y}{\rightarrow}&\sigma \omega Y\\
\end{array}$$
Since  $\overline{g_X}$ and $\overline{g_Y}$ are isomorphisms in
$\Z/\D$,  $\sigma \omega$ is an equivalence. Dually one can show
that $\omega \sigma$ is also an equivalence. This proves $\sigma$ is
an equivalence.
\end{proof}

A direct application of Theorem 3.12 to the special case when $\C$
is a triangulated category is the following result.

\begin{cor}
Let $\cal C$ be a triangulated category, and $(\cal Z,Z)$ a ${\cal
D}-mutation$ pair. Then the quotient category $\cal Z / D$ is a
triangulated category.
\end{cor}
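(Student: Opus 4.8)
The corollary follows by specializing Theorem~3.12. The plan is to verify that a triangulated category together with a $\D$-mutation pair $(\Z,\Z)$ satisfies every hypothesis of that theorem, so that $\Z/\D$ is a right triangulated category whose shift functor $\sigma$ is an equivalence — i.e.\ a triangulated category.

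First I would check Assumption 2.10: this is automatic for a triangulated category, since the shift functor $T$ is then an equivalence and the axioms $TR(2)$--$TR(3)$ for triangulated categories are ``two-sided'', so a rotated triangle can be rotated back. Next I would check that $\D$ is factor-through-epic: by Remark~2.8, when $\C$ is triangulated \emph{any} subcategory is factor-through-epic, because $T$ is an equivalence, so given $f\in[T^n\D](TX,TY)$ one simply sets $f'=T^{-1}f$ and observes $f'$ still factors through the (unique up to isomorphism) preimage $T^{-1}$ of the relevant object of $T^n\D$, which lies in $T^{n-1}\D$. The remaining structural hypotheses — that $\D\subseteq\Z$, that $\Z$ is extension-closed, and that $(\Z,\Z)$ is a $\D$-mutation pair, which in particular gives $\Z=\mu(\Z;\D)$ — are either part of the statement or are standard consequences of $(\Z,\Z)$ being a $\D$-mutation pair (extension-closedness of a subcategory forming a mutation pair with itself is a routine octahedral-axiom argument, and may be cited from [IY] in the triangulated setting).

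It then remains to verify the one genuinely extra hypothesis of Theorem~3.12, namely that $T|_{\cal Z}:\Z\to T\Z$ is full. But since $\C$ is triangulated, $T$ is an equivalence on all of $\C$, hence fully faithful, and in particular its restriction to any subcategory is full (indeed fully faithful). With all hypotheses checked, Theorem~3.12 applies verbatim and yields that $\Z/\D$ is triangulated.

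I do not expect any real obstacle here: the corollary is precisely the assertion that the more restrictive hypotheses of Theorem~3.12 (Assumption 2.10, factor-through-epic, $T|_{\cal Z}$ full) are all automatic in the triangulated case. The only point requiring a sentence of care is confirming that $(\Z,\Z)$ being a $\D$-mutation pair forces $\Z$ to be extension-closed — this is where one would invoke the octahedral axiom as in [IY] — and that $\Z=\mu(\Z;\D)$ holds by definition of a mutation pair; everything else is an immediate consequence of $T$ being invertible.
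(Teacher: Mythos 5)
Your overall strategy is exactly the paper's: the authors give no separate proof of this corollary, presenting it as ``a direct application'' of the main theorem of Section 3.4 (Theorem 3.11), and your verification that Assumption 2.10 holds in any triangulated category (Remark 2.12), that every subcategory of a triangulated category is factor-through-epic (Remark 2.8, justified exactly as you do via $T^{-1}$), that $\mathcal{Z}=\mu(\mathcal{Z};\mathcal{D})$ and $\mathcal{D}\subseteq\mathcal{Z}$ are built into Definition 2.6, and that $T|_{\mathcal{Z}}$ is full because $T$ is an equivalence, is all correct and is precisely what the corollary is implicitly claiming is automatic.

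There is, however, one genuine gap: your assertion that $(\mathcal{Z},\mathcal{Z})$ being a $\mathcal{D}$-mutation pair \emph{forces} $\mathcal{Z}$ to be extension-closed, ``by a routine octahedral-axiom argument as in [IY].'' This is not true, and [IY] does not prove it: in Theorem 4.2 of [IY] extension-closedness is a separate standing hypothesis (their condition (Z1)), imposed alongside the mutation-pair condition (Z2), not deduced from it. A concrete counterexample to your claim: take $\mathcal{D}=0$. Then $\mu^{-1}(\mathcal{X};0)=T\mathcal{X}$ and $\mu(\mathcal{Y};0)=T^{-1}\mathcal{Y}$, so $(\mathcal{Z},\mathcal{Z})$ is a $0$-mutation pair if and only if $T\mathcal{Z}=\mathcal{Z}$; any $T$-stable additive subcategory that is not closed under extensions (e.g.\ the additive closure of all shifts of the two simples over the Kronecker algebra, inside its bounded derived category) satisfies this, yet the cone of a morphism between its objects need not lie in it, so the construction of right triangles in $\mathcal{Z}/\mathcal{D}$ in Theorem 3.9 breaks down. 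The honest reading of the corollary is that it silently retains the hypothesis ``$\mathcal{Z}$ is extension-closed'' from Theorem 3.11 (the paper's statement is arguably incomplete on this point); your proof should simply carry that hypothesis along rather than attempt to derive it, since as stated the derivation fails.
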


\bigskip

\subsection{Examples}

\begin{exm} In [IY], for a rigid subcategory $\cal D$ of a triangulated category $\cal C$, i.e. $\cal D$ satisfies
$Hom_{\C}({\cal D},T{\cal D})=0$, Iyama and Yoshino defined $\cal
D-$mutation pair in $\cal C$. It is easy to see that when $\cal D$
is rigid, and $(\cal X,Y)$ is a ${\cal D}-mutation$ pair, then
$(\cal X,Y)$ is a $\cal D-$mutation pair in the sense of Definition
2.6.
 It follows from Corollary 3.12 that the quotient category $\cal Z / \mathcal{D}$ is a triangulated category, which is Theorem 4.2 in [IY].
\end{exm}

\begin{exm}
Let $\cal C$ be a triangulated category with shift functor $T$,
$\cal X \subseteq \cal C$ is a functorially finite subcategory.
J{\o}rgensen proved in [J] that if $\cal C$ has Serre functor $S$
which satisfies $T^{-1} \circ S\cal X=\cal X$, then in the triangle
$M\rightarrow N\rightarrow P\rightarrow TP$, $M\rightarrow N$ is
$\cal X$$-monic$ if and only if $N\rightarrow P$ is $\cal X$$-epic$.
So $(\cal C,C)$ is a ${\cal X}-mutation$ pair defined in Definition
2.6. It follows from Corollary 3.12, the quotient category $\cal C /
\mathcal{D}$ is a triangulated category, which is the sufficient
part of  Theorem 3.3 in [J].
\end{exm}

\begin{exm} Let $\cal C$ be a $2-$Calabi-Yau triangulated category
[KR,IY], and $E$ an rigid object in $\cal C$, i.e. $Ext^1_{\cal
C}(E,E)=0$. The quotient category ${\cal {C}}/\mbox{add} E$ is a
right triangulated category [ABM]. Let $\cal X$ be the subcategory
of $\cal C$ consisting of objects $X$ with Hom$_{\cal C}(E,X[1])=0$.
It is easy to see that $(\cal X, \cal X)$ is an
$\mbox{add}E$-mutation pair in $\cal C$. Now passing to the right
triangulated quotient category
 ${\cal {C}}/\mbox{add} E$, we get the quotient subcategory of ${\cal {C}}/\mbox{add} E$,
denoted by $\cal Z$.  Then $(\cal Z,\cal Z)$ is a $0-$mutation pair
in ${\cal {C}}/\mbox{add} E$, hence $\Z$ is a triangulated
subcategory in $\cal C$ by Theorem 3.11.
\end{exm}

\begin{exm} Let $(\cal B, \cal S)$ be the exact category defined in [H]. Assume
that $(\cal B, \cal S)$ has enough $\cal S$-injectives, $\underline
{\cal{ B}}$ is the quotient category $\cal{B}/\cal{S}$
 [H]. Then according to the theorem of Chapter 1.2 in [H],
$\underline{\cal{B}}$ is a right triangulated category.
\medskip

{\bf Claim:} If all the $\cal S$-injectives are also $\cal
S$-projectives (which means that the set of $S-$injectives
 is contained in the set of $S-$projectives) , then the shift functor
 in $\underline{\cal{B}}$ is fully
and faithful. Moreover the right triangulated category
 $\underline{\cal B}$ satisfies the Assumption 2.10.\\
 {\bf Proof.}
 We will give a brief proof of this claim:\\
 Note that we will use the notations in [H], just changing "triangle" into "right triangle".\\
(i)For any morphism $f:X\rightarrow Y$ in $\cal B$, there exists a commutative diagram of short exact sequences\\
$$ \begin{array}{ccclclclclcl}
0&\s{0}{\rightarrow}&X&\s{\mu(X)}{\rightarrow}&I(X)&\s{\pi(X)}{\rightarrow}&TX&\s{0}{\rightarrow}&0\\
\ &&\downarrow f&&\downarrow I(f)&&\downarrow f'&&\ &&\\
0&\s{0}{\rightarrow}&Y&\s{\mu(Y)}{\rightarrow}&I(Y)&\s{\pi(Y)}{\rightarrow}&TY&\s{0}{\rightarrow}&0.\\
\end{array}$$
Then $\underline {f'}=T\underline f$ in $\underline {\cal B}$ by the
definition of $T$ [H]. Suppose $\underline {f'}=0$. Then $f'$
factors through some $\cal S$$-injective$ I. But I is also $\cal
S$$-projective$, then $f'$ factors through I(Y).
Let $f'=\pi(Y)\alpha$ with $\alpha:TX\rightarrow I(Y)$, then $\pi(Y)(I(f)-\alpha\pi(X))=0$,
which means $I(f)-\alpha\pi(X)$ factors through Y. Let $I(f)-\alpha\pi(X)=\mu(Y)\beta$,
where $\beta:I(X)\rightarrow Y$, then $\mu(Y)(f-\beta\mu(X))=0$. Since $\mu(Y)$ is monic, $f=\beta\mu(X)$, thus $\underline f=0$.
 This proves $T$ is faithful.\\
(ii)For any morphism $f':TX\rightarrow TY$, we have a commutative diagram of short exact sequences
$$ \begin{array}{ccclclclclcl}
0&\s{0}{\rightarrow}&X&\s{\mu(X)}{\rightarrow}&I(X)&\s{\pi(X)}{\rightarrow}&TX&\s{0}{\rightarrow}&0\\
\ &&\downarrow f&&\downarrow g&&\downarrow f'&&\ &&\\
0&\s{0}{\rightarrow}&Y&\s{\mu(Y)}{\rightarrow}&I(Y)&\s{\pi(Y)}{\rightarrow}&TY&\s{0}{\rightarrow}&0,\\
\end{array}$$
 where the morphism $g$ exists due to that I(X) is $\cal S$$-projective$.
  Now there exists a commutative diagram of short exact sequences
$$ \begin{array}{ccclclclclcl}
0&\s{0}{\rightarrow}&X&\s{\mu(X)}{\rightarrow}&I(X)&\s{\pi(X)}{\rightarrow}&TX&\s{0}{\rightarrow}&0\\
\ &&\downarrow f&&\downarrow I(f)&&\downarrow f''&&\ &&\\
0&\s{0}{\rightarrow}&Y&\s{\mu(Y)}{\rightarrow}&I(Y)&\s{\pi(Y)}{\rightarrow}&TY&\s{0}{\rightarrow}&0\\
\end{array}$$
where $\underline {f''}=T\underline f$. By $(g-I(f))\pi(X)=0$, we have that there exists a
morphism $\alpha:T(X)\rightarrow I(Y)$ such that $g-I(f)=\alpha\pi(X)$. Then $(f'-f'')\pi(X)=\pi(Y)(g-I(f))=\pi(Y)\alpha\pi(X)$.
Now we get $(f'-f''-\pi(Y)\alpha)\pi(X)=0$.
Since $\pi(X)$ is epic, $(f'-f''-\pi(Y)\alpha)=0$. Then $\underline f'= \underline {f''}=T\underline f$. This proves $T$ is full.\\
(iii)Assume that $Y\s{\underline g}{\rightarrow}Z\s{\underline
h}{\rightarrow}TX\s{-T\underline u}{\rightarrow}TY$ is a right
triangle in $\underline {\cal B}$. Then we can get the following
commutative diagram:
\[ \begin{CD}
TX@>-T\underline u >>TY @>-T\underline v >>TC_u@>-T\underline w >>T^2X\\
@VV1V@VV1V @VV \underline l'V @VV 1V \\
TX@>-T\underline u>>TY@>-T\underline g>>TZ@>-T\underline h>>T^2X\\
    \end{CD} \]
where the rows are right triangles, and $X\s{\underline
u}{\rightarrow}Y\s{\underline v}{\rightarrow}C_u\s{\underline
w}{\rightarrow}TX$ is a standard right triangle
 in $\underline
{\cal B}$. By Prop 2.13, $\underline l'$ is an isomorphism. Let
$\underline {l'k'}=1_{TZ}$ and $\underline {k'l'}=1_{TC_u}$. Since T
is full, there exist morphisms $\underline l:C_u\rightarrow Z$ and
$\underline k:Z\rightarrow C_u$ such that $T\underline l=\underline
l'$, $T\underline k=\underline k'$. Now we get $T\underline
{kl}=T1_{C_u}$ and $T\underline {lk}=T1_Z$. Since T is faithful, $l$
is an isomorphism. Then $X\s{\underline
u}{\rightarrow}Y\s{\underline g}{\rightarrow}Z\s{\underline
h}{\rightarrow}TX$ is a right triangle in $\underline {\cal B}$.
This proves that $\underline {\cal B}$ satisfies the Assumption
2.10.
\end{exm}

\begin{center}
\textbf {ACKNOWLEDGMENTS.}\end{center} The authors would like to
thank the referee for his/her very useful suggestions to improve the
paper.

\newpage

\small

\end{document}